\newtheorem{Theorem}{Theorem}
\newtheorem{Corollary}{Corollary}
\newtheorem{Lemma}{Lemma}
\theoremstyle{definition}
\newtheorem{Definition}{Definition}
\theoremstyle{remark}
\title[Characterization theorem for Laguerre-Hahn OP on snul] {Characterization theorem for
Laguerre-Hahn orthogonal polynomials on non-uniform lattices}
\author[A. Branquinho]{\sc A. Branquinho}
\address[A. Branquinho]{CMUC and Department of Ma\-the\-ma\-tics, University of Coimbra,
Apar\-ta\-do 3008, EC Santa Cruz, 3001-501 COIMBRA, Portugal.}
\email[A. Branquinho]{ajplb@mat.uc.pt}
 \author[M.N.  Rebocho]{\sc M.N. Rebocho$^{*}$}
 \address[M.N. Rebocho]{Departamento de Matem\'atica,
 Universidade da Beira Interior, 6201-001 Covilh\~a, Portugal;
 CMUC, University of Coimbra, Apartado 3008, EC Santa Cruz,
3001-501 Coimbra, Portugal.}
 \email[M.N. Rebocho]{mneves@ubi.pt}
\thanks{$*$ Corresponding author.
}
\begin{document}

 \begin{abstract}
It is  stated and proved a characterization theorem for
Laguerre-Hahn orthogonal polynomials on non-uniform lattices.  This
theorem proves the equivalence between the Riccati equation for the
formal Stieltjes function, linear first-order  difference relations
for the orthogonal polynomials  as well as for the associated
polynomials of the first kind, and linear first-order difference
relations for the  functions of the second kind.
 \end{abstract}

 \maketitle

\section{Introduction}\label{sec:1}

The present paper concerns orthogonal polynomials of a discrete
variable on
 non-uniform lattices (commonly
 denoted by snul). These lattices
are associated with  divided differences operators such as the
Wilson or Askey-Wilson operator (\cite[Section 5]{askey-wilson}, and
\cite{atak-etal,hahn-q,niki-snul,niki-sus-uv}). Specifically, we
focus our attention on the so-called Laguerre-Hahn orthogonal
polynomials. The Laguerre-Hahn orthogonal polynomials on non-uniform
lattices  were introduced by A. Magnus in \cite{magnus-LHsnul}, as
the ones for which the formal Stieltjes function  satisfies a
Riccati difference equation with polynomial coefficients, with the
difference operator taken as a  general divided difference operator
given by \cite[Eq. (1.1)]{magnus-LHsnul} (see Section \ref{sec:2} of
the present paper for the precise definitions and main properties).
In this pioneering work, Magnus establishes difference relations as
well as representations for the Laguerre-Hahn orthogonal polynomials
and he proves that, under certain restrictions on the degrees of the
coefficient of the Riccati difference equation, the Laguerre-Hahn
orthogonal polynomials are the associated Askey-Wilson polynomials
\cite{andrews-askey,askey-wilson}.

As it is well known from the setting of continuous orthogonality,
 Laguerre-Hahn
orthogonal polynomials inherit many properties from the classical
and semi-classical families
\cite{sylvesterOPRL,dini,magnus-ric,maroni}. Indeed, one of the
research topics within the Laguerre-Hahn theory of a discrete
variable is the  so-called structure relations, that is, linear
difference relations involving the orthogonal polynomials (see
 \cite{ban,fou-diffeq,Mamma-koef-ronv,fou-paco} and their lists of
 references).
In the semi-classical case, it was proven in \cite{magnus-snul-sc}
the characterization of semi-classical orthogonal polynomials on
non-uniform lattices in terms of  structure relations. A more recent
contribution, \cite{fou-ITSF}, proves the characterization of
classical polynomials on non-uniform lattices in terms of two types
of structure relations, using the so-called functional approach.

In the present paper we show a characterization theorem for
Laguerre-Hahn orthogonal polynomial on arbitrary non-uniform
lattices. Our main result is given in Theorem \ref{teo2}, where it
is shown the equivalence between:

(a) the Riccati difference equation for the formal Stieltjes
function, $S$;

\hangindent=1.2cm \hangafter=1 (b) linear first-order  difference
relations for orthogonal polynomials related to $S$, as well as for
the associated polynomials of the first kind;

\hangindent=1.2cm \hangafter=1 (c) linear first-order difference
relations for the functions of the second kind related to $S$.

The difference relations contained in Theorem \ref{teo2} for
Laguerre-Hahn families extend some of the difference relations for
the classical families given in
\cite{fou-ITSF,magnus-snul-sc,witte}.

 This paper is organized as follows. In Section~\ref{sec:2} we give the definitions and state the basic results which
 will be used in the forthcoming sections. In Section~\ref{sec:3}  we show the main results of the paper,  namely,
 the equivalence between the above referred conditions
 (a), (b) and (c), stated in Theorem \ref{teo2}
 Section~\ref{sec:4} is devoted to the proof of Theorem \ref{teo2}.

\section{Preliminary results}\label{sec:2}

\subsection{The operators $\mathbb{D}, \mathbb{E}_j, \mathbb{M}$ and the related non-uniform lattices}

We consider the divided difference operator $\mathbb{D}$ given in
 \cite[Eq.
(1.1)]{magnus-LHsnul}, involving the values of a function at two
points, with the fundamental property that $\mathbb{D}$ leaves a
polynomial of degree $n-1$ when applied to a polynomial of degree
$n$.  The operator $\mathbb{D}$, defined on the space of arbitrary
functions, is given by
\begin{equation}
(\mathbb{D}f)(x)=\frac{f(y_2(x))-f(y_1(x))}{y_2(x)-y_1(x)}\,,\label{eq:def-D}
\end{equation}
where, at this stage, $y_1$ and $y_2$ are still unknown functions.
To define them, one starts by using the property that  $\mathbb{D}f$
is a polynomial of degree $n-1$ whenever $f$ is a polynomial of
degree $n$. Then, applying $\mathbb{D}$ to $f(x)=x^2$ and
$f(x)=x^3$,  one obtains, respectively,
\begin{gather}
y_1(x)+y_2(x)= \textrm{polynomial of degree } 1\,, \label{eq:cond-y1}\\
(y_1(x))^2+y_1(x)y_2(x)+(y_2(x))^2= \textrm{polynomial of degree }
2\,, \label{eq:cond-y2}
\end{gather}
 the later condition being equivalent to $y_1(x)y_2(x)=$
polynomial of degree less or equal than $2$. The  conditions
(\ref{eq:cond-y1})-(\ref{eq:cond-y2}) define $y_1$ and $y_2$ as the
two roots of a quadratic equation
\begin{equation}
\hat{a}y^2+2\hat{b}xy+\hat{c}x^2+2\hat{d}y+2\hat{e}x+\hat{f}=0\,,
\quad \hat{a}\neq 0\,.\label{eq:conica}
\end{equation}

Identities involving $y_1$ and $y_2$, following from the fact that
$y_1, y_2$ are the roots of~(\ref{eq:conica}):
\begin{eqnarray}
&&y_1(x)+y_2(x)=-2(\hat{b}x+\hat{d})/\hat{a}\,,\label{eq:somay1-y2}\\
&&y_1(x)y_2(x)=(\hat{c}x^2+2\hat{e}x+\hat{f})/\hat{a}\,,\label{eq:prody1-y2}\\
&&y_1(x)=p(x)-\sqrt{r(x)}\,,
\;\;y_2(x)=p(x)+\sqrt{r(x)}\,,\label{eq:y1-y2}
\end{eqnarray}
with $p, r$ polynomials given by
$p(x)=-\displaystyle\frac{\hat{b}x+\hat{d}}{\hat{a}}\,,\;
r(x)=\displaystyle\frac{\lambda}{\hat{a}^2}
\left(x+\frac{\hat{b}\hat{d}-\hat{a}\hat{e}}{\lambda}\right)^2+\frac{\tau}{\hat{a}\lambda}\,,\;$
where $\lambda=\hat{b}^2-\hat{a}\hat{c},\;
 \tau=\left((\hat{b}^2-\hat{a}\hat{c})(\hat{d}^2-\hat{a}\hat{f})-(\hat{b}\hat{d}-\hat{a}\hat{e})^2\right)/\hat{a}.$

 There are four primary classes of lattices and related divided difference operators~(\ref{eq:def-D}):\\
(i) the linear lattice, related to the forward difference operator
\cite[Chapter 2, Section~12]{nik-MIR}\,;\\
(ii) the $q$-linear lattice, related to the $q$-difference operator
 \cite{hahn-q}\,;\\
(iii) the quadratic lattice, related to the Wilson operator
\cite{askey-wilson}\,;\\
(iv) the $q$-quadratic lattice, related to the Askey-Wilson operator
 \cite{askey-wilson}.\\
Such a classification  of lattices is done according to the two
parameters $\lambda$ and $\tau$ above defined, assuming
                                       $\hat{a}\hat{c}\neq 0$:  $\lambda=\tau=0$ in case (i); $\lambda\neq 0, \tau=0$ in case (ii);
                                       $\lambda=0, \tau \neq 0$ in case (iii); $\lambda\,\tau  \neq 0$ in case (iv).
 Each of the operators in (i)-(iv) is an extension of the preceding
one, which is recovered as a particular case or as a limit case, up
to a linear transformation of the variable.

In \cite[Section 2]{magnus-snul-sc} it is given a geometric
interpretation of the lattices. For the quadratic class of lattices
(the so-called  snul), it is possible to have a parametric
representation of the conic (\ref{eq:conica}), say $\left\{x(s),
y(s)\right\}$, such that $y_1(x(s))=y(s)=x(s-1/2)$ and
$y_2(x(s))=y(s+1)=x(s+1/2)$, thus leading to
\cite{atak-etal,niki-snul,niki-sus-uv} $x(s)=c_1q^s+c_2q^{-s}+c_3,$
$q+q^{-1}=4\hat{b}^2/(\hat{a}\hat{c})-2,$ if $\lambda\,\tau \neq 0,$
and $x(s)=c_4s^2+c_5s+c_6,$ if $\lambda=0, \tau  \neq 0$, with
appropriate constants $c$'s.

 In the present paper we
will consider the general case $\lambda\,\tau \neq 0.$ Throughout
the paper we shall use the notation $\Delta_y=y_2-y_1.$ Note that
$\Delta_y=2\sqrt{r}.$ We shall operate with the divided difference
operator given in its general form (\ref{eq:def-D}). By defining the
operators $\mathbb{E}_1$ and $\mathbb{E}_2$ (see
\cite{magnus-LHsnul}), acting on arbitrary functions $f$ as
\begin{equation}
(\mathbb{E}_1f)(x)=f(y_1(x))\,, \;\;(\mathbb{E}_2f)(x)=f(y_2(x))\,,
\label{eq:E1-E2-defi}\nonumber
\end{equation}
 then (\ref{eq:def-D}) is given by
\begin{equation}
(\mathbb{D}f)(x)=\frac{(\mathbb{E}_2f)(x)-(\mathbb{E}_1f)(x)}{y_2(x)-y_1(x)}\,.\nonumber
\end{equation}

 The companion operator of $\mathbb{D}$ is defined  as (see
 \cite{magnus-LHsnul})
\begin{equation}
(\mathbb{M}f)(x)=\frac{(\mathbb{E}_1f)(x)+(\mathbb{E}_1f)(x)}{2}\,.\label{eq:def-M}
\nonumber
\end{equation}
Some useful identities involving $\mathbb{D}, \mathbb{M}$ and
$\mathbb{E}_1, \mathbb{E}_2$ are listed below (see
\cite{magnus-LHsnul}):
\begin{eqnarray}
&&\mathbb{D}(gf)=\mathbb{D}g\,\mathbb{M}f+\mathbb{M}g\,\mathbb{D}f\,,\label{eq:D-prod}\\
&&\mathbb{D}(g/f)=\frac{\mathbb{D}g\,\mathbb{M}f-\mathbb{D}f\,\mathbb{M}g}{\mathbb{E}_1f\, \mathbb{E}_2f}\,,\label{eq:D-quoc}\\
&&\mathbb{D}(1/f)=\frac{-\mathbb{D}f}{\mathbb{E}_1f\, \mathbb{E}_2f}\,,\label{eq:D-1sobref}\\
&&\mathbb{M}(gf)=\mathbb{M}g\,  \mathbb{M}f+\frac{\Delta_y^2}{4}\,\mathbb{D}g\,\mathbb{D}f \,,\label{eq:M-prod}\\
&&\mathbb{M}(g/f)=\frac{\mathbb{E}_1g\,
\mathbb{E}_2f+\mathbb{E}_2g\,\mathbb{E}_1f}{2\mathbb{E}_1f\, \mathbb{E}_2f}\,,\label{eq:M-quoc} \\
&&\mathbb{M}(1/f)=\frac{\mathbb{M}f}{\mathbb{E}_1f\,
\mathbb{E}_2f}\,.\label{eq:M-1sobref}
\end{eqnarray}
Eq. (\ref{eq:D-prod}) has the equivalent forms: \begin{eqnarray}
&&\mathbb{D}(gf)=\mathbb{D}g\,
\mathbb{E}_1f+\mathbb{D}f\,\mathbb{E}_2g\,,\label{eq:Dprod-gf-E1}\\
&&\mathbb{D}(gf)=\mathbb{D}g\,
\mathbb{E}_2f+\mathbb{D}f\,\mathbb{E}_1g\,.\label{eq:Dprod-gf-E2}
\end{eqnarray}
Also, one has two equivalent forms for (\ref{eq:D-quoc}):
\begin{eqnarray}
&&\mathbb{D}(g/f)=\frac{\mathbb{D}g\,
\mathbb{E}_1f-\mathbb{D}f\,\mathbb{E}_1g}{\mathbb{E}_1f\, \mathbb{E}_2f}\,,\label{eq:D-quoc-E1}\\
&&\mathbb{D}(g/f)=\frac{\mathbb{D}g\mathbb{E}_2f-\mathbb{D}f\mathbb{E}_2g}{\mathbb{E}_1f\,
\mathbb{E}_2f}\,. \label{eq:D-quoc-E2}
\end{eqnarray}

Further identities to be used throughout the paper are given in the
 following lemma.
\begin{Lemma}\label{eq:lemma-ident-aux} The following assertions hold:\\
(a)\; $\mathbb{E}_1f\,\mathbb{E}_2f$ is a polynomial whenever $f$ is
a polynomial;\\
(b) \;$\mathbb{M}f$ is a polynomial whenever $f$ is
a polynomial;\\
(c)\; $(\mathbb{E}_1f)^2+(\mathbb{E}_2f)^2$ is a polynomial whenever
$f$ is a polynomial;\\
 (d)\; for arbitrary functions $f, g$, the following
identities take place: \begin{eqnarray}
&&\mathbb{E}_1f\,\mathbb{E}_2g+\mathbb{E}_1g\,\mathbb{E}_2f
=2\mathbb{M}g\,\mathbb{M}f-\frac{\Delta_y^2}{2}\,\mathbb{D}g\,\mathbb{D}f\,, \label{eq:identE1E2}\\
&&-\mathbb{D}f\,\mathbb{E}_1g+\mathbb{D}g\,\mathbb{E}_1f=2\mathbb{M}f\,\mathbb{D}g-\mathbb{D}(gf)\,.
\label{eq:ident-D-DE1E2}
\end{eqnarray}
 Therefore, $\mathbb{E}_1f\,\mathbb{E}_2g+
\mathbb{E}_1g\,\mathbb{E}_2f$ and
$-\mathbb{D}f\,\mathbb{E}_1g+\mathbb{D}g\,\mathbb{E}_1f$ are
polynomials whenever $f$ and $g$ are polynomials.
\end{Lemma}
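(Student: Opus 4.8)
The plan is to exploit the conjugate structure $y_1=p-\sqrt{r}$, $y_2=p+\sqrt{r}$ of (\ref{eq:y1-y2}), where $p$ and $r$ are polynomials. For parts (a)--(c) I would first observe that, for any polynomial $f$, expanding $f(p+\sqrt{r})$ and collecting terms according to even and odd powers of $\sqrt{r}$ (using that $r$ is a polynomial, so even powers of $\sqrt{r}$ are polynomials) yields a decomposition
\[
(\mathbb{E}_2 f)(x) = A(x)+B(x)\sqrt{r(x)}, \qquad (\mathbb{E}_1 f)(x) = A(x)-B(x)\sqrt{r(x)},
\]
with $A,B$ polynomials, the second equality being the conjugate obtained via $\sqrt{r}\mapsto-\sqrt{r}$. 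From this, (b) is immediate since $\mathbb{M}f=A$; part (a) follows from $\mathbb{E}_1 f\,\mathbb{E}_2 f = A^2-rB^2$; and part (c) from $(\mathbb{E}_1 f)^2+(\mathbb{E}_2 f)^2 = 2(A^2+rB^2)$. Equivalently, each of these three quantities is a symmetric function of $y_1,y_2$, hence a polynomial in the elementary symmetric functions $y_1+y_2$ and $y_1 y_2$, which are polynomials by (\ref{eq:somay1-y2})--(\ref{eq:prody1-y2}). I note in passing that the same decomposition gives $\mathbb{D}f=B$, re-deriving the degree-lowering property of $\mathbb{D}$.

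For the two identities in (d) I would argue by direct substitution of the defining formulas $\mathbb{M}h=(\mathbb{E}_1 h+\mathbb{E}_2 h)/2$ and $\mathbb{D}h=(\mathbb{E}_2 h-\mathbb{E}_1 h)/\Delta_y$. For (\ref{eq:identE1E2}), expanding the right-hand side $2\mathbb{M}g\,\mathbb{M}f-\frac{\Delta_y^2}{2}\mathbb{D}g\,\mathbb{D}f=\frac{1}{2}\big[(\mathbb{E}_1 g+\mathbb{E}_2 g)(\mathbb{E}_1 f+\mathbb{E}_2 f)-(\mathbb{E}_2 g-\mathbb{E}_1 g)(\mathbb{E}_2 f-\mathbb{E}_1 f)\big]$ causes the diagonal terms $\mathbb{E}_1 g\,\mathbb{E}_1 f$ and $\mathbb{E}_2 g\,\mathbb{E}_2 f$ to cancel, leaving exactly the cross terms $\mathbb{E}_1 f\,\mathbb{E}_2 g+\mathbb{E}_1 g\,\mathbb{E}_2 f$; this is a polarization-type identity. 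For (\ref{eq:ident-D-DE1E2}) I would start from the product rule in the form (\ref{eq:Dprod-gf-E1}), namely $\mathbb{D}(gf)=\mathbb{D}g\,\mathbb{E}_1 f+\mathbb{D}f\,\mathbb{E}_2 g$, substitute it into the right-hand side $2\mathbb{M}f\,\mathbb{D}g-\mathbb{D}(gf)$, and use $2\mathbb{M}f=\mathbb{E}_1 f+\mathbb{E}_2 f$ together with $\mathbb{E}_2 h-\mathbb{E}_1 h=\Delta_y\,\mathbb{D}h$; the terms then reorganize into $-\mathbb{D}f\,\mathbb{E}_1 g+\mathbb{D}g\,\mathbb{E}_1 f$.

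Finally, for the concluding polynomiality assertions I would simply read off the two identities just established. By (\ref{eq:identE1E2}), the quantity $\mathbb{E}_1 f\,\mathbb{E}_2 g+\mathbb{E}_1 g\,\mathbb{E}_2 f$ equals $2\mathbb{M}g\,\mathbb{M}f-\frac{\Delta_y^2}{2}\mathbb{D}g\,\mathbb{D}f$, where $\mathbb{M}f,\mathbb{M}g$ are polynomials by (b), $\mathbb{D}f,\mathbb{D}g$ are polynomials by the degree-lowering property, and $\Delta_y^2=4r$ is a polynomial; hence the whole expression is a polynomial. Likewise, by (\ref{eq:ident-D-DE1E2}), the quantity $-\mathbb{D}f\,\mathbb{E}_1 g+\mathbb{D}g\,\mathbb{E}_1 f$ equals $2\mathbb{M}f\,\mathbb{D}g-\mathbb{D}(gf)$, a polynomial since $\mathbb{M}f$ is a polynomial, $\mathbb{D}g$ is a polynomial, and $gf$ is a polynomial (so $\mathbb{D}(gf)$ is too). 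The conceptual crux I would emphasize is that $\mathbb{E}_1 f$ and $\mathbb{E}_2 f$ are individually \emph{not} polynomials, since they carry the irrational part $B\sqrt{r}$; polynomiality of the combinations in (a), (c) and of the two mixed expressions hinges entirely on the cancellation of these $\sqrt{r}$-terms between $y_1$ and $y_2$, which the conjugate decomposition renders transparent. Beyond this observation the computations are routine, so I expect no serious obstacle apart from careful bookkeeping.
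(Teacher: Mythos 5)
Your proposal is correct, and it draws on the same toolbox as the paper --- conjugation/symmetry in $y_1,y_2$ and elementary algebra among $\mathbb{D},\mathbb{M},\mathbb{E}_1,\mathbb{E}_2$ --- but it is organized differently enough to be worth comparing. The paper proves (a), (b), (c) by three separate arguments: (a) by factoring $f$ into linear factors and reducing to the elementary symmetric functions (\ref{eq:somay1-y2})--(\ref{eq:prody1-y2}); (b) by binomially expanding $f(p\pm\sqrt{r})$ and observing that odd powers of $\sqrt{r}$ cancel, which is exactly your conjugate decomposition; and (c) by the reduction $(\mathbb{E}_1f)^2+(\mathbb{E}_2f)^2=4(\mathbb{M}f)^2-2\,\mathbb{E}_1f\,\mathbb{E}_2f$, pulling (c) back to (a) and (b). Your single decomposition $\mathbb{E}_{1,2}f=A\mp B\sqrt{r}$ subsumes all three parts at once and additionally yields $\mathbb{D}f=B$, so the degree-lowering property of $\mathbb{D}$ comes out as a byproduct rather than being presupposed; this buys a more uniform and self-contained treatment, while the paper's case-by-case route avoids ever invoking the explicit formula for $r$. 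For (d), the paper derives (\ref{eq:identE1E2}) and (\ref{eq:ident-D-DE1E2}) from the multiplicativity $\mathbb{E}_j(gf)=\mathbb{E}_jg\,\mathbb{E}_jf$, via the intermediate identities (\ref{eq:ident-aux-E1E2})--(\ref{eq:ident-aux-DE1E2}) and the product rule (\ref{eq:M-prod}); you instead verify (\ref{eq:identE1E2}) by direct polarization in $\mathbb{E}_1,\mathbb{E}_2$, which is immediate and correct, and for (\ref{eq:ident-D-DE1E2}) you start from (\ref{eq:Dprod-gf-E1}), which leaves you with $\mathbb{D}g\,\mathbb{E}_2f-\mathbb{D}f\,\mathbb{E}_2g$ and hence genuinely requires the conversion step $\mathbb{E}_2h-\mathbb{E}_1h=\Delta_y\,\mathbb{D}h$ that you flag --- note that starting from (\ref{eq:Dprod-gf-E2}) instead produces $-\mathbb{D}f\,\mathbb{E}_1g+\mathbb{D}g\,\mathbb{E}_1f$ in a single line, which is in effect what the paper does. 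Your concluding polynomiality argument coincides with the paper's, and your closing remark that the $\sqrt{r}$-cancellation is the conceptual crux accurately captures why the individual $\mathbb{E}_jf$ fail to be polynomials while the stated combinations succeed.
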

\begin{proof}
 Let $f$ be an arbitrary polynomial, and let us write
$f(x)=\prod_{k=0}^{n}(x-x_{n,k})$.
\begin{eqnarray*}
(\mathbb{E}_1f\,\mathbb{E}_2f)(x)&=&\prod_{k=0}^{n}(y_1(x)-x_{n,k})\prod_{k=0}^{n}(y_2(x)-x_{n,k})\\
&=&\prod_{k=0}^{n}\left(y_1(x)y_2(x)-(y_1(x)+y_2(x))x_{n,k}+x^2_{n,k}\right)\,.
\end{eqnarray*}
From (\ref{eq:somay1-y2}) and (\ref{eq:prody1-y2}) there follows
$$(\mathbb{E}_1f\,\mathbb{E}_2f)(x)=\prod_{k=0}^{n}\left(\frac{\hat{c}x^2+2\hat{e}x+\hat{f}}{\hat{a}}+\frac{2}{\hat{a}}(\hat{b}x+\hat{d})x
+x^2_{n,k}\right)\,,$$ which is a polynomial, thus proving assertion
(a).

Let us now write $f$ as $f(x)=\sum_{k=0}^{n}a_{n,k}x^k$. Recall
(\ref{eq:y1-y2}), $y_1(x)=p(x)-\sqrt{r(x)},
y_2(x)=p(x)+\sqrt{r(x)}$. Thus,
\begin{eqnarray*}
(\mathbb{E}_1f+\mathbb{E}_2f)(x)&=&\sum_{k=0}^{n}a_{n,k}((y_1(x))^k+(y_2(x))^k)\\
&=&\sum_{k=0}^{n}a_{n,k}\left((p(x)-\sqrt{r(x)})^k+(p(x)+\sqrt{r(x)})^k\right)\,.
\end{eqnarray*}
 As $\displaystyle (p\pm \sqrt{r})^k=\sum_{j=0}^{k}\left(
                                          \begin{array}{c}
                                            k \\
                                            j \\
                                          \end{array}
                                        \right)
p^j\left(\pm \sqrt{r}\right)^{k-j}\,,$ we obtain
$$(\mathbb{E}_1f+\mathbb{E}_2f)(x)=\sum_{k=0}^{n}a_{n,k}\sum_{j=0}^{k}\left(
                                          \begin{array}{c}
                                            k \\
                                            j \\
                                          \end{array}
                                        \right)p^j(x)\left(\sqrt{r(x)}^{k-j}+(-\sqrt{r(x)})^{k-j}\right)\,.$$
As $\sqrt{r(x)}^{k-j}+(-\sqrt{r(x)})^{k-j}$ is zero whenever $k-j$
is odd and it is a polynomial whenever $k-j$ is even, there follows
that $\mathbb{E}_1f+\mathbb{E}_2f$ is a polynomial, thus proving
assertion (b).

To prove assertion (c) let us note that
$(\mathbb{E}_1f)^2+(\mathbb{E}_2f)^2=(\mathbb{E}_1f+\mathbb{E}_2f)^2-2\mathbb{E}_1f\mathbb{E}_2f$,
that is,
$$(\mathbb{E}_1f)^2+(\mathbb{E}_2f)^2=4(\mathbb{M}f)^2-2\mathbb{E}_1f\mathbb{E}_2f\,.$$
Hence, assertion (c) is a consequence of (a) and (b).

For arbitrary functions $f, g$, the following identities take place:
\begin{eqnarray} &&\mathbb{E}_1f\,\mathbb{E}_2g+
\mathbb{E}_1g\,\mathbb{E}_2f=(\mathbb{E}_1g+\mathbb{E}_2g)(\mathbb{E}_1f+\mathbb{E}_2f)-(\mathbb{E}_1(gf)+\mathbb{E}_2(gf))\,, \label{eq:ident-aux-E1E2}\\
&&-\mathbb{D}f\,\mathbb{E}_1g+\mathbb{D}g\,\mathbb{E}_1f=(\mathbb{E}_1f+\mathbb{E}_2f)\mathbb{D}g-\mathbb{D}(gf)\,,
\label{eq:ident-aux-DE1E2}
\end{eqnarray}
as $\mathbb{E}_j(gf)=\mathbb{E}_jg\,\mathbb{E}_jf,\; j=1,2.$

Note that (\ref{eq:ident-aux-E1E2}) also reads as
$$\mathbb{E}_1f\,\mathbb{E}_2g+
\mathbb{E}_1g\,\mathbb{E}_2f=4
\mathbb{M}g\,\mathbb{M}f-2\mathbb{M}(gf)\,.$$ Taking into account
the property (\ref{eq:M-prod}), the above equation yields
(\ref{eq:identE1E2}).

Eq. (\ref{eq:ident-aux-DE1E2}) gives us (\ref{eq:ident-D-DE1E2}).
\end{proof}

\subsection{Laguerre-Hahn orthogonal polynomials and auxiliary results}
We shall consider formal orthogonal polynomials related to a
(formal) Stieltjes function defined by
\begin{equation}
S(x)=\displaystyle\sum_{n=0}^{+\infty}u_n
x^{-n-1}\label{eq:defi-S}\end{equation} where $(u_n)$, the sequence
of moments, is such that $\det\left[
\begin{matrix}u_{i+j}\end{matrix}\right]_{i,j=0}^n \neq 0,\, n \geq 0$, $u_0=1.$ The orthogonal
polynomials related to $S$, $P_n, n \geq 0,$ are the diagonal Pad\'e
denominators of (\ref{eq:defi-S}), thus the numerator polynomial (of
degree $n-1$), henceforth denoted by $P_{n-1}^{(1)}$ , and the
denominator $P_n$ (of degree $n$) are determined through
\begin{equation}S(x)-P_{n-1}^{(1)}(x)/P_n(x)=\mathcal{O}(x^{-2n-1})\,, \quad
x \to \infty\,.\label{eq:Herm-Pade-cond}
\end{equation}
Throughout the paper we consider each $P_n$ monic,
 and we will denote the
sequence of monic polynomials  $\{P_n\}_{n\geq 0}$ by SMOP.

Monic orthogonal polynomials satisfy  a three term recurrence
relation \cite{szego}
\begin{equation}
P_{n+1}(x)=(x-\beta_n)P_n(x)-\gamma_nP_{n-1}(x)\,,\quad
n=0,1,2,...\,, \label{eq:ttrr-Pn}
\end{equation} with $P_{-1}(x)=0, \;
P_0(x)=1,$ and $\gamma_n\neq 0, \; n \geq 1, \; \gamma_0=u_0=1$.

The sequence  $\{P_n^{(1)}\}_{n\geq 0}$, also known as the sequence
of associated polynomials of the first kind, satisfies the three
term recurrence relation
\begin{equation}
P^{(1)}_{n}(x)=(x-\beta_n)P^{(1)}_{n-1}(x)-\gamma_nP^{(1)}_{n-2}(x)\,,
\quad n=1,2,... \label{eq:ttrr-Pn1}
\end{equation} with  $P^{(1)}_{-1}(x)=0, \;P^{(1)}_0(x)=1$.

An  equivalent form of  (\ref{eq:Herm-Pade-cond}), often encountered
in the literature of orthogonal polynomials (see, for example,
\cite{walter} and its list of references), is given by
\begin{equation}
q_{n}=P_{n}S-P_{n-1}^{(1)}\,, \quad n \geq 1\,, \;\; q_0=S\,,
\label{eq:defi-qn}
\end{equation}
where $q_n, n \geq 0,$ are the so-called functions of the second
kind corresponding to $\{P_n\}_{n\geq 0}$. The sequence
$\{q_n\}_{n\geq 0}$ also satisfies a three term recurrence relation,
\begin{equation}
q_{n+1}(x)=(x-\beta_n)q_n(x)-\gamma_nq_{n-1}(x)\,,\quad
n=0,1,2,...\label{eq:ttrr-qn}
\end{equation} with initial conditions $q_{-1}=1,\; q_0(x)=S(x).$
There holds $q_n(x)=\mathcal{O}(x^{-n-1}), x \to \infty$.

We will make use of the following result (see \cite{brez}).

\begin{Lemma}
\label{Lemma:liouv} Let $\{P_n\}_{n\geq 0}$ be a SMOP and let
$\{P_n^{(1)}\}_{n\geq 0}$ be the sequence of associated polynomials
of the first kind. The following holds:
\begin{equation}
\mathbb{E}_jP_n^{(1)}\,\mathbb{E}_jP_n-\mathbb{E}_jP_{n+1}\,\mathbb{E}_jP_{n-1}^{(1)}=\prod_{k=0}^n\gamma_k\,,\;\;\;
j=1,2\,, \quad n\geq 0\,.\label{eq:liouville}
\end{equation}
Therefore, for each $j=1,2,$ $\mathbb{E}_jP_n^{(1)}$ and
$\mathbb{E}_jP_{n+1}$ do not share zeroes.
\end{Lemma}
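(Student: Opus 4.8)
The plan is to deduce (\ref{eq:liouville}) from the classical Liouville--Ostrogradski (Casoratian) identity for the two sequences, and then to propagate the substitution operator $\mathbb{E}_j$ through it, exploiting that $\mathbb{E}_j$ is multiplicative. The starting observation is that $\{P_n\}_{n\geq 0}$ and $\{P_{n-1}^{(1)}\}_{n\geq 0}$ solve one and the same three-term recurrence $y_{n+1}=(x-\beta_n)y_n-\gamma_ny_{n-1}$: for $\{P_n\}$ this is (\ref{eq:ttrr-Pn}), while for $R_n:=P_{n-1}^{(1)}$ the relation (\ref{eq:ttrr-Pn1}), written with the index shifted, becomes $R_{n+1}=(x-\beta_n)R_n-\gamma_nR_{n-1}$.

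First I would form the Casoratian $C_n:=P_nP_{n-2}^{(1)}-P_{n-1}P_{n-1}^{(1)}$ of these two solutions and check, straight from the recurrence, that $C_{n+1}=\gamma_nC_n$. Iterating from the base value $C_1=P_1P_{-1}^{(1)}-P_0P_0^{(1)}=-1$ (since $P_{-1}^{(1)}=0$ and $P_0=P_0^{(1)}=1$) yields $C_{n+1}=-\gamma_1\cdots\gamma_n$, and hence, recalling $\gamma_0=1$,
\[
P_n^{(1)}P_n-P_{n+1}P_{n-1}^{(1)}=\prod_{k=0}^{n}\gamma_k\,,\qquad n\geq 0\,.
\]
This is precisely the scalar identity behind (\ref{eq:liouville}), namely the determinant formula recorded in \cite{brez}.

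The decisive step is then to apply $\mathbb{E}_j$ to both sides. Being the pure substitution $f\mapsto f\circ y_j$, the operator $\mathbb{E}_j$ is linear, it fixes constants, and it is multiplicative, $\mathbb{E}_j(gf)=\mathbb{E}_jg\,\mathbb{E}_jf$ (the property already invoked in the proof of Lemma~\ref{eq:lemma-ident-aux}). Consequently the left-hand side maps termwise to $\mathbb{E}_jP_n^{(1)}\,\mathbb{E}_jP_n-\mathbb{E}_jP_{n+1}\,\mathbb{E}_jP_{n-1}^{(1)}$, while the constant $\prod_{k=0}^{n}\gamma_k$ is left unchanged, giving exactly (\ref{eq:liouville}). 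For the closing assertion, if $\mathbb{E}_jP_n^{(1)}$ and $\mathbb{E}_jP_{n+1}$ had a common zero $x_0$, then both products on the left of (\ref{eq:liouville}) would vanish at $x_0$, forcing $\prod_{k=0}^{n}\gamma_k=0$ and contradicting $\gamma_k\neq 0$; hence they share no zero.

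The argument is essentially bookkeeping, so there is no serious obstacle; the one point worth stressing is that it works only because $\mathbb{E}_j$ is a ring homomorphism. This is in sharp contrast with $\mathbb{D}$ and $\mathbb{M}$, whose product rules (\ref{eq:D-prod}) and (\ref{eq:M-prod}) carry correction terms, so the same verbatim transport of a scalar identity would fail for those operators. Keeping the index shift $R_n=P_{n-1}^{(1)}$ straight in the Casoratian computation is the only place where care is needed.
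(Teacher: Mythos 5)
Your proof is correct and takes essentially the same route as the paper: the paper likewise obtains (\ref{eq:liouville}) by applying $\mathbb{E}_j$ to the scalar identity $P_n^{(1)}P_n-P_{n+1}P_{n-1}^{(1)}=\prod_{k=0}^n\gamma_k$ and deduces the statement on zeros from the non-vanishing of the $\gamma_k$. The only difference is that you rederive that scalar identity from the Casoratian recurrence (done correctly, including the index shift $R_n=P_{n-1}^{(1)}$), whereas the paper simply cites it from \cite{brez}.
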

\begin{proof}
Eq. (\ref{eq:liouville}) follows from the application of the
operator $\mathbb{E}_j$, $j=1,2,$ to the identity
\begin{equation}P_n^{(1)}P_n-P_{n+1}P_{n-1}^{(1)}=\prod_{k=0}^n\gamma_k\,,
\quad n\geq 0\,.\label{eq:liou-Pn}\nonumber\end{equation} From
(\ref{eq:liouville}) there follows the statement concerning the
zeros.
\end{proof}

\begin{Definition}
A SMOP $\{P_n\}_{n\geq 0}$ related to a Stieltjes function, $S$, is
said to be Laguerre-Hahn if $S$ satisfies a Riccati equation
\begin{equation}
A(x)(\mathbb{D}S)(x)=B(x)(\mathbb{E}_1S)(x)(\mathbb{E}_2S)(x)+C(x)(\mathbb{M}S)(x)+D(x)\,,\label{eq:defi-Ric-S}
\end{equation}
where  $A, B, C, D$ are polynomials in $x$,  $A\neq 0$. \\
If $B\equiv 0$, then $\{P_n\}_{n\geq 0}$ is said to be
semi-classical.
\end{Definition}

We will make use of the Lemma that follows.

\begin{Lemma}\label{Theorem:Magnus}
Let  $\{f_n\}$ be  a sequence of functions satisfying a recurrence
relation
\begin{equation} f_{n+1}(x)=(x-\beta_n)f_n(x)-\gamma_nf_{n-1}(x)\,,
\;\; \gamma_n\neq 0, \; n \geq 0\,.\label{eq:ttrr-f}
\end{equation}
 Let $g_n=f_{n+1}/f_n$ satisfy
\begin{equation}
A_n(x)(\mathbb{D}g_n)(x)=B_n(x)(\mathbb{E}_1 g_n)(x) (\mathbb{E}_2
g_n)(x)+C_n(x) (\mathbb{M}g_n)(x)+D_n(x), \quad n\geq
0\,,\label{eq:ric-gn}
\end{equation}
where $A_n$, $B_n$, $C_n$, $D_n$ are polynomials whose degrees are
uniformly bounded. For each $n\geq 0,$ there exists a polynomial,
$\varrho_n$, with uniformly bounded degree, such that the following
relations hold:
\begin{eqnarray}
A_{n+1}&=&\varrho_n\left(A_n-\frac{\Delta_y^2}{2}\,\frac{D_n}{\gamma_{n+1}}\right)\,,\label{eq:Magnus-An}\\
B_{n+1}&=&\varrho_n\frac{D_n}{\gamma_{n+1}}\,, \label{eq:Magnus-Bn}\\
C_{n+1}&=&\varrho_n\left(-C_n-2\mathbb{M}(x-\beta_{n+1})\frac{D_n}{\gamma_{n+1}}\right)\,, \label{eq:Magnus-Cn}\\
D_{n+1}&=&\varrho_n\left(A_n+\gamma_{n+1}B_n+\mathbb{M}(x-\beta_{n+1})C_n
+\mathbb{E}_1(x-\beta_{n+1})\mathbb{E}_2(x-\beta_{n+1})\frac{D_n}{\gamma_{n+1}}\right).\hspace{0.4cm}
\label{eq:Magnus-Dn}
\end{eqnarray}
\end{Lemma}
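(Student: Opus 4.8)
The plan is to carry the Riccati equation (\ref{eq:ric-gn}) for $g_n$ over to one for $g_{n+1}$, using that the three-term recurrence (\ref{eq:ttrr-f}) couples the two ratios. Dividing (\ref{eq:ttrr-f}) by $f_n$ and shifting the index by one produces the scalar relation $g_{n+1}=(x-\beta_{n+1})-\gamma_{n+1}/g_n$, which expresses $g_{n+1}$ rationally through $g_n$. Setting $w=x-\beta_{n+1}$ (so $\mathbb{D}w=1$), the whole proof reduces to applying $\mathbb{D},\mathbb{M},\mathbb{E}_1,\mathbb{E}_2$ to this relation and then removing $\mathbb{D}g_n$ by means of (\ref{eq:ric-gn}).

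First I would use the quotient identities (\ref{eq:D-1sobref}) and (\ref{eq:M-1sobref}), together with $\mathbb{E}_j(1/g_n)=1/\mathbb{E}_jg_n$, to write, over the common denominator $\mathbb{E}_1g_n\,\mathbb{E}_2g_n$,
\[
\mathbb{D}g_{n+1}=1+\gamma_{n+1}\frac{\mathbb{D}g_n}{\mathbb{E}_1g_n\,\mathbb{E}_2g_n},\qquad
\mathbb{M}g_{n+1}=\mathbb{M}w-\gamma_{n+1}\frac{\mathbb{M}g_n}{\mathbb{E}_1g_n\,\mathbb{E}_2g_n}.
\]
For the product I would expand $\mathbb{E}_1g_{n+1}\,\mathbb{E}_2g_{n+1}=\bigl(\mathbb{E}_1w-\gamma_{n+1}/\mathbb{E}_1g_n\bigr)\bigl(\mathbb{E}_2w-\gamma_{n+1}/\mathbb{E}_2g_n\bigr)$ and recognise the cross term as $\mathbb{E}_1w\,\mathbb{E}_1g_n+\mathbb{E}_2w\,\mathbb{E}_2g_n=2\mathbb{M}(wg_n)=2\mathbb{M}w\,\mathbb{M}g_n+\tfrac{\Delta_y^2}{2}\mathbb{D}g_n$, the last step by (\ref{eq:M-prod}) with $\mathbb{D}w=1$; this gives $\mathbb{E}_1g_{n+1}\,\mathbb{E}_2g_{n+1}$ again over the denominator $\mathbb{E}_1g_n\,\mathbb{E}_2g_n$, with numerator involving $\mathbb{E}_1w\,\mathbb{E}_2w$, $\mathbb{M}w\,\mathbb{M}g_n$, $\mathbb{D}g_n$ and $\gamma_{n+1}^2$. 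By Lemma \ref{eq:lemma-ident-aux} all the quantities $\mathbb{E}_1g_n\,\mathbb{E}_2g_n$, $\mathbb{M}w$, $\mathbb{E}_1w\,\mathbb{E}_2w$ are polynomials.

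Next I would insert the bracketed expressions of (\ref{eq:Magnus-An})--(\ref{eq:Magnus-Dn}) into the left-hand side of the candidate Riccati relation $A_{n+1}\mathbb{D}g_{n+1}-B_{n+1}\mathbb{E}_1g_{n+1}\mathbb{E}_2g_{n+1}-C_{n+1}\mathbb{M}g_{n+1}-D_{n+1}$, multiply through by $\mathbb{E}_1g_n\,\mathbb{E}_2g_n$, and collect. The terms carrying a factor $\gamma_{n+1}$ assemble into $\gamma_{n+1}\bigl(A_n\mathbb{D}g_n-B_n\mathbb{E}_1g_n\mathbb{E}_2g_n-C_n\mathbb{M}g_n-D_n\bigr)$, which is zero by (\ref{eq:ric-gn}); the terms carrying $D_n/\gamma_{n+1}$ combine into a multiple of $2(\mathbb{M}w)^2-2\mathbb{E}_1w\,\mathbb{E}_2w-\tfrac{\Delta_y^2}{2}$, which vanishes identically because $\mathbb{E}_1w\,\mathbb{E}_2w=(\mathbb{M}w)^2-\tfrac{\Delta_y^2}{4}(\mathbb{D}w)^2$ (a consequence of (\ref{eq:M-prod})) and $\mathbb{D}w=1$; everything else cancels in pairs. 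This verifies that the bracketed expressions are admissible coefficients of a Riccati equation (\ref{eq:ric-gn}) for $g_{n+1}$, that is, the relations (\ref{eq:Magnus-An})--(\ref{eq:Magnus-Dn}) hold with $\varrho_n\equiv 1$ for that particular choice.

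It remains to account for the general factor $\varrho_n$: the prescribed coefficients $A_{n+1},B_{n+1},C_{n+1},D_{n+1}$ of (\ref{eq:ric-gn}) need not coincide with the bracketed expressions, but they must be proportional to them through a single polynomial $\varrho_n$. To see this I would eliminate $\mathbb{D}g_{n+1}$ between the two representations, obtaining a relation $U_1\,\mathbb{E}_1g_{n+1}\mathbb{E}_2g_{n+1}+U_2\,\mathbb{M}g_{n+1}+U_3=0$ with $U_1,U_2,U_3$ of uniformly bounded degree; writing $\mathbb{E}_jg_{n+1}=\mathbb{E}_jf_{n+2}/\mathbb{E}_jf_{n+1}$ and clearing $\mathbb{E}_1f_{n+1}\mathbb{E}_2f_{n+1}$ turns this into a polynomial identity whose three constituents have degrees growing with $n$ while the $U_i$ stay bounded; a degree count, together with the non-coincidence of the zeros of $\mathbb{E}_jf_{n+1}$ and $\mathbb{E}_jf_{n+2}$ furnished by Lemma \ref{Lemma:liouv}, forces $U_1=U_2=U_3=0$ and hence the desired proportionality, the bound on $\deg\varrho_n$ coming from the uniform degree bound on all coefficients. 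I expect the term-collection of the third step, where the two structural cancellations must be exhibited exactly, and the justification in this last paragraph that one and the same $\varrho_n$ serves all four relations, to be the main obstacles; the operator computations of the second step are routine once the quotient rules are in hand.
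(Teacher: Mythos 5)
Your first three steps are, in substance, the paper's own proof run in the reverse direction. The paper starts from $g_{n+1}=(x-\beta_{n+1})-\gamma_{n+1}/g_n$, applies $\mathbb{D}$ via (\ref{eq:D-1sobref}), multiplies by $A_n$ and eliminates $A_n\mathbb{D}g_n$ through (\ref{eq:ric-gn}), and then rewrites $\mathbb{M}(1/g_n)$ and $1/(\mathbb{E}_1g_n\,\mathbb{E}_2g_n)$ in terms of $g_{n+1}$ using $1/g_n=\bigl((x-\beta_{n+1})-g_{n+1}\bigr)/\gamma_{n+1}$, (\ref{eq:M-1sobref}) and (\ref{eq:identE1E2}), thereby \emph{deriving} a Riccati equation for $g_{n+1}$ whose coefficients are exactly your bracketed expressions. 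You instead express $\mathbb{D}g_{n+1}$, $\mathbb{M}g_{n+1}$ and $\mathbb{E}_1g_{n+1}\,\mathbb{E}_2g_{n+1}$ through $g_n$ and \emph{verify} those coefficients by substitution; the two structural cancellations you predict are correct (the terms carrying $\gamma_{n+1}$ assemble into $\gamma_{n+1}\bigl(A_n\mathbb{D}g_n-B_n\mathbb{E}_1g_n\mathbb{E}_2g_n-C_n\mathbb{M}g_n-D_n\bigr)=0$, and the $D_n/\gamma_{n+1}$ terms vanish via $\mathbb{E}_1w\,\mathbb{E}_2w=(\mathbb{M}w)^2-\tfrac{\Delta_y^2}{4}$ with $w=x-\beta_{n+1}$, which is your consequence of (\ref{eq:M-prod})). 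Up to this point the proposal is sound and equivalent to the paper's argument.

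The divergence, and the genuine gap, is your final paragraph. The paper itself merely \emph{asserts} that comparing the derived equation with (\ref{eq:ric-gn}) written at $n+1$ yields a single polynomial $\varrho_n$ realizing all four of (\ref{eq:Magnus-An})--(\ref{eq:Magnus-Dn}); it gives no argument, so any justification you supply must stand on its own, and yours does not. First, the lemma is stated for an \emph{arbitrary} sequence of functions $\{f_n\}$ satisfying the recurrence --- in the paper's own application (the Corollary) one takes $f_n=q_n$, the functions of the second kind, which are not polynomials --- so ``clearing $\mathbb{E}_1f_{n+1}\mathbb{E}_2f_{n+1}$ turns this into a polynomial identity'' is false in the required generality, and Lemma \ref{Lemma:liouv}, which concerns the zeros of $\mathbb{E}_jP_{n+1}$ and $\mathbb{E}_jP_n^{(1)}$, is simply not available for general $f_n$. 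Second, even in the polynomial case the bare degree count is inconclusive: the three constituents have $x$-degrees roughly $2n+4$, $2n+3$ and $2n+2$, differing by bounded amounts, so top-degree cancellation among the three terms is not excluded merely because the $U_i$ have uniformly bounded degree; one would need an additional argument (evaluation at the growing set of zeros of $\mathbb{E}_1f_{n+1}$, or an asymptotic comparison as $x\to\infty$) to force $U_1=U_2=U_3=0$. Finally, even granting the proportionalities $A_{n+1}\hat{B}_{n+1}=\hat{A}_{n+1}B_{n+1}$ etc., the common ratio is a priori only a rational function --- polynomiality of $\varrho_n$ requires ruling out a common factor among the hatted coefficients, a point neither you nor the paper addresses. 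So your elimination correctly reduces the uniqueness-up-to-factor claim to $U_1=U_2=U_3=0$, but the argument offered does not establish it; you should either restrict the justification honestly to the polynomial case and flag the extension, or supply the asymptotic argument, rather than present the sketch as complete.
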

\begin{proof}
From (\ref{eq:ttrr-f}) we have
$g_n=(x-\beta_n)-\gamma_n\displaystyle\frac{1}{g_{n-1}},$ thus,
\begin{equation}
g_{n+1}=(x-\beta_{n+1})-\gamma_{n+1}\frac{1}{g_{n}}\,.\label{eq:Magnusaux-1}
\end{equation}

Applying $\mathbb{D}$ to (\ref{eq:Magnusaux-1}) and using
$\mathbb{D}(1/g_{n})=-\displaystyle\frac{\mathbb{D}g_{n}}{\mathbb{E}_1g_{n}\,\mathbb{E}_2g_{n}}$
(cf. (\ref{eq:D-1sobref}))
 we get
$$\mathbb{D}g_{n+1}=1+\gamma_{n+1}\frac{\mathbb{D}g_{n}}{\mathbb{E}_1g_{n}\,\mathbb{E}_2g_{n}}\,.$$
Now we  multiply the above equation by $A_n$ and use
(\ref{eq:ric-gn}), as well as the identity
$\displaystyle\frac{\mathbb{M}g_n}{\mathbb{E}_1g_n\,
\mathbb{E}_2g_n}=\mathbb{M}(1/g_n) $ (cf. (\ref{eq:M-1sobref})),
thus obtaining
\begin{equation}
A_n\mathbb{D}g_{n+1}=A_n+\gamma_{n+1}B_n+\gamma_{n+1}C_n\mathbb{M}(1/g_n)
+\frac{\gamma_{n+1}D_n}{\mathbb{E}_1g_{n}\,\mathbb{E}_2g_{n}}\,.\label{eq:Magnusaux-2}
\end{equation}

Note that from (\ref{eq:Magnusaux-1}) we get
\begin{equation}
\mathbb{M}(1/g_n)=\frac{\mathbb{M}(x-\beta_{n+1})}{\gamma_{n+1}}-\frac{\mathbb{M}g_{n+1}}{\gamma_{n+1}}\,.\label{eq:Magnusaux-3}
\end{equation}
Also, as
$\displaystyle\frac{1}{\mathbb{E}_1\,g_{n}\mathbb{E}_2g_{n}}=\displaystyle\mathbb{E}_1\left(\frac{1}{g_n}\right)\mathbb{E}_2\left(\frac{1}{g_n}\right),$
from (\ref{eq:Magnusaux-1}) we get
\begin{multline*}\frac{1}{\mathbb{E}_1g_{n}\,\mathbb{E}_2g_{n}}=
\frac{1}{\gamma^2_{n+1}}\left(\mathbb{E}_1(x-\beta_{n+1})\mathbb{E}_2(x-\beta_{n+1})\right.\\
-\left.
\left(\mathbb{E}_1(x-\beta_{n+1})\mathbb{E}_2g_{n+1}+\mathbb{E}_2(x-\beta_{n+1})\mathbb{E}_1g_{n+1}\right)
+\mathbb{E}_1g_{n+1} \mathbb{E}_2g_{n+1}\right)\,,\end{multline*}
thus, taking into account (\ref{eq:identE1E2}), we get
\begin{multline}
\displaystyle\frac{1}{\mathbb{E}_1\,g_{n}\mathbb{E}_2g_{n}}=\frac{1}{\gamma^2_{n+1}}\mathbb{E}_1(x-\beta_{n+1})\mathbb{E}_2(x-\beta_{n+1})\\
-\frac{1}{\gamma^2_{n+1}}
\left(2\mathbb{M}(x-\beta_{n+1})\mathbb{M}g_{n+1}-\frac{\Delta_y^2}{2}\,\mathbb{D}g_{n+1}\right)
+\frac{\mathbb{E}_1g_{n+1}\,\mathbb{E}_2g_{n+1}}{\gamma^2_{n+1}}\,,
\label{eq:Magnusaux-4}
\end{multline}
where is was used $\mathbb{D}(x-\beta_{n+1})=1$. The substitution of
(\ref{eq:Magnusaux-3}) and (\ref{eq:Magnusaux-4}) in
(\ref{eq:Magnusaux-2}) yields
\begin{equation}\hat{A}_{n+1}\mathbb{D}g_{n+1}=\hat{B}_{n+1}\mathbb{E}_1g_{n+1}\,\mathbb{E}_2
g_{n+1}+\hat{C}_{n+1}\mathbb{M}g_{n+1}+\hat{D}_{n+1}\,,\label{eq:ric-hat}\end{equation}
with
\begin{eqnarray}
\hat{A}_{n+1}&=&A_n-\frac{\Delta_y^2}{2}\,\frac{D_n}{\gamma_{n+1}}\,,\label{eq:Magnus-hat-An}\nonumber\\
\hat{B}_{n+1}&=&\frac{D_n}{\gamma_{n+1}}\,, \label{eq:Magnus-hat-Bn}\nonumber\\
\hat{C}_{n+1}&=&-C_n-2\mathbb{M}(x-\beta_{n+1})\frac{D_n}{\gamma_{n+1}}\,, \label{eq:Magnus-hat-Cn}\nonumber\\
\hat{D}_{n+1}&=&A_n+\gamma_{n+1}B_n+\mathbb{M}(x-\beta_{n+1})C_n
+\mathbb{E}_1(x-\beta_{n+1})\mathbb{E}_2(x-\beta_{n+1})\frac{D_n}{\gamma_{n+1}}\,.
\label{eq:Magnus-hat-Dn}\nonumber
\end{eqnarray}

 Taking into account (\ref{eq:ric-hat}) and (\ref{eq:ric-gn}) written to
 $n+1$, there follows the existence of a polynomial, say $\varrho_n,$ such
 that $$\frac{{A}_{n+1}}{\hat{A}_{n+1}}=\frac{{B}_{n+1}}{\hat{B}_{n+1}}=\frac{{C}_{n+1}}{\hat{C}_{n+1}}=\frac{{D}_{n+1}}{\hat{D}_{n+1}}=\varrho_{n}\,,\quad n \geq 0\,,$$
 thus
 (\ref{eq:Magnus-An})-(\ref{eq:Magnus-Dn}) follow.  As the degrees of  $A_n, B_n, C_n,
 D_n,$ are bounded by a number independent of $n$,
 there follows that $\deg(\varrho_n)$  must also be bounded.
\end{proof}

\section{Characterization theorem}\label{sec:3}
\begin{Theorem} \label{teo2}
Let $S$ be a Stieltjes function, let $\{P_n\}_{n\geq 0}$ be the
corresponding SMOP, and let $\{P_{n}^{(1)}\}_{n\geq 0}$,
$\{q_n\}_{n\geq 0}$ be the sequence of associated polynomials of the
first kind and the sequence of functions of the second kind,
respectively.
 The following statements are
equivalent:\\
(a)\; $S$ satisfies the Riccati equation (\ref{eq:defi-Ric-S}),
$$A \mathbb{D}S =B \mathbb{E}_1S\,\mathbb{E}_2S+C\mathbb{M}S+D\,,$$
where $A, B, C, D$ are polynomials;\\
 (b)\;
$P_n$ and  $P_{n}^{(1)}$ satisfy the difference relations, for all
$n \geq 1,$
\begin{equation}
\begin{cases}
A\mathbb{D}P_{n}=(l_{n-1}+\Delta_y
\pi_{n-1})\mathbb{E}_1P_{n}-\displaystyle
C/2\,\mathbb{E}_2P_{n}-B\mathbb{E}_2P_{n-1}^{(1)}+\Theta_{n-1}\mathbb{E}_1P_{n-1}\,,
\vspace{0.2cm}\\
A\mathbb{D}P^{(1)}_{n-1}=(l_{n-1}+\Delta_y
\pi_{n-1})\mathbb{E}_1P_{n-1}^{(1)}+\displaystyle
C/2\,\mathbb{E}_2P^{(1)}_{n-1}+D\mathbb{E}_2P_{n}+
\Theta_{n-1}\mathbb{E}_1P_{n-2}^{(1)}\,;
\end{cases}\label{eq:est-psin1-(1)}
\end{equation}
(c)\; $q_n$ satisfies
\begin{equation} A \mathbb{D}q_{n}
   =(l_{n-1}+\Delta_y \pi_{n-1})\mathbb{E}_1 q_{n}+\left(B\mathbb{E}_1S+C/2\right)\mathbb{E}_2q_{n}+
  \Theta_{n-1}\,\mathbb{E}_1q_{n-1}\,, \;\; n\geq
  0\,,\label{eq:est-qn-(1)}
\end{equation}
where $\Delta_y=y_2-y_1$, and
 $l_n, \pi_n, \Theta_n$ are polynomials of uniformly bounded degrees.
\end{Theorem}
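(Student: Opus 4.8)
The plan is to put the second-kind relation (c) at the centre and to prove $(a)\Leftrightarrow(c)$ together with $(b)\Leftrightarrow(c)$. Three ingredients drive the argument: the decomposition (\ref{eq:defi-qn}), $q_n=P_nS-P_{n-1}^{(1)}$, which couples the polynomial data to $S$; Lemma~\ref{Theorem:Magnus}, which propagates a Riccati equation along a ratio of solutions of the three term recurrence and, crucially, guarantees uniformly bounded degrees; and the involution $\sigma$ that interchanges $y_1$ and $y_2$, hence $\mathbb{E}_1\leftrightarrow\mathbb{E}_2$, $\Delta_y\mapsto-\Delta_y$ and $\mathbb{D}\mapsto-\mathbb{D}$, while fixing polynomials and $\mathbb{M}$. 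The involution explains why the coefficient of $\mathbb{E}_1q_n$ in (\ref{eq:est-qn-(1)}) splits into a $\sigma$-even part $l_{n-1}$ and a $\sigma$-odd part $\Delta_y\pi_{n-1}$, and I would use it repeatedly to separate every identity into its symmetric and antisymmetric components.

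For $(a)\Rightarrow(c)$ I would first observe that the ratio $g_0=q_0/q_{-1}=S$ satisfies (\ref{eq:defi-Ric-S}), so the construction in the proof of Lemma~\ref{Theorem:Magnus}, applied to the sequence $q_{n-1}$, gives a Riccati equation for every $g_n=q_n/q_{n-1}$ with polynomial coefficients of uniformly bounded degree; clearing denominators with (\ref{eq:D-quoc-E1}) and (\ref{eq:M-quoc}) and using the $\mathbb{E}_j$-images of (\ref{eq:ttrr-qn}) and the Casorati identity (\ref{eq:liouville}) of Lemma~\ref{Lemma:liouv} turns these into (\ref{eq:est-qn-(1)}), with $l_n,\pi_n,\Theta_n$ built from the Riccati coefficients, $\gamma_n$ and $\beta_n$. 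It is cleaner, though, to package the conversion as an induction directly on (\ref{eq:est-qn-(1)}): the base case $n=0$ is (\ref{eq:defi-Ric-S}) rewritten, via $C\mathbb{M}S=\tfrac{C}{2}\mathbb{E}_1S+\tfrac{C}{2}\mathbb{E}_2S$, as $A\mathbb{D}S=\tfrac{C}{2}\mathbb{E}_1S+(B\mathbb{E}_1S+C/2)\mathbb{E}_2S+D$, which forces $l_{-1}=C/2$, $\pi_{-1}=0$, $\Theta_{-1}=D$; note that the coefficient $B\mathbb{E}_1S+C/2$ of $\mathbb{E}_2q_n$ is the same for all $n$. The step applies $\mathbb{D}$ to (\ref{eq:ttrr-qn}) through the product rule (\ref{eq:D-prod}) and substitutes the relations already known for $q_n$ and $q_{n-1}$.

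The hard part is the inductive step. Reducing $A\mathbb{D}q_{n+1}$ to the template of (\ref{eq:est-qn-(1)}) with the $\mathbb{E}_j$-images of (\ref{eq:ttrr-qn}) leaves a residual multiple of $\mathbb{E}_2q_n$ with coefficient $\tfrac{A}{2}-\tfrac{\Delta_y}{2}(B\mathbb{E}_1S+C/2)$, which is not of the allowed form. Removing it is where the hypothesis must be fed back in: the $\sigma$-image of the inductive hypothesis (equivalently, the relation between $\mathbb{E}_1S$ and $\mathbb{E}_2S$ obtained by multiplying (\ref{eq:defi-Ric-S}) by $\Delta_y$) converts this $\mathbb{E}_2q_n$ term into admissible $\mathbb{E}_1$-contributions, after which the coefficients collect into the pattern of (\ref{eq:est-qn-(1)}) and fix the recursions for $l_n,\pi_n,\Theta_n$. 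Controlling the bookkeeping of the non-polynomial quantities $\Delta_y=2\sqrt r$ and $\mathbb{E}_jS$ here is the main obstacle of the whole proof. The converse $(c)\Rightarrow(a)$ is then immediate: evaluating (\ref{eq:est-qn-(1)}) at $n=0$ with $q_0=S$, $q_{-1}=1$ gives $A\mathbb{D}S=(l_{-1}+\Delta_y\pi_{-1})\mathbb{E}_1S+(B\mathbb{E}_1S+C/2)\mathbb{E}_2S+\Theta_{-1}$; comparing $\sigma$-even and $\sigma$-odd parts forces $\pi_{-1}=0$, and then $C=2l_{-1}$, $D=\Theta_{-1}$ recover (\ref{eq:defi-Ric-S}).

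It remains to establish $(b)\Leftrightarrow(c)$ through (\ref{eq:defi-qn}). For $(c)\Rightarrow(b)$, since (c) already yields (a), I would eliminate $\mathbb{D}S$ using (\ref{eq:defi-Ric-S}), substitute $q_n=P_nS-P_{n-1}^{(1)}$ and $\mathbb{E}_jq_n=\mathbb{E}_jP_n\,\mathbb{E}_jS-\mathbb{E}_jP_{n-1}^{(1)}$ into (\ref{eq:est-qn-(1)}), and expand $\mathbb{D}q_n$ with the product rule (\ref{eq:D-prod}); the resulting identity carries the factors $1,\mathbb{E}_1S,\mathbb{E}_2S,\mathbb{E}_1S\,\mathbb{E}_2S$ with polynomial coefficients. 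Because the $q_n$ decay like $x^{-n-1}$ while the $P_n$ are polynomials, $S$ is not rational, so the $S$-carrying part and the $S$-free part must balance separately; the first gives the relation for $P_n$ and the second the relation for $P_{n-1}^{(1)}$ in (\ref{eq:est-psin1-(1)}), with the trace-zero matrix $\left(\begin{smallmatrix}-C/2 & -B\\ D & C/2\end{smallmatrix}\right)$ appearing as the block of $\mathbb{E}_2$-coefficients. The direction $(b)\Rightarrow(c)$ runs the same computation backwards: forming $A\mathbb{D}q_n=A\,\mathbb{D}(P_nS)-A\,\mathbb{D}P_{n-1}^{(1)}$ and inserting the two relations (\ref{eq:est-psin1-(1)}) together with (\ref{eq:defi-Ric-S}), the $S$-dependent terms recombine into the coefficients of $\mathbb{E}_1q_n$, $\mathbb{E}_2q_n$ and $\mathbb{E}_1q_{n-1}$ of (\ref{eq:est-qn-(1)}), the Casorati identity (\ref{eq:liouville}) ensuring the recombination closes.
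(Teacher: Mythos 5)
There is a genuine gap, and it sits exactly where the paper invests most of its effort: your proposal never proves the implication $(b)\Rightarrow(a)$ (equivalently, $(b)\Rightarrow(c)$ without circularity). In your $(b)\Rightarrow(c)$ step you form $A\mathbb{D}q_n=A\mathbb{D}(P_nS)-A\mathbb{D}P_{n-1}^{(1)}$ and ``insert the two relations (\ref{eq:est-psin1-(1)}) together with (\ref{eq:defi-Ric-S})'' --- but (\ref{eq:defi-Ric-S}) \emph{is} statement (a), which is not available when (b) is the sole hypothesis. Your architecture therefore establishes only $(a)\Leftrightarrow(c)$ and $(a)\Rightarrow(b)$; from (b) alone nothing returns. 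Recovering the Riccati equation from the first-order relations is the hard content of Section~4 of the paper: one stacks $\psi_n=(P_{n+1},P_n^{(1)})^{T}$, rewrites (\ref{eq:est-psin1-(1)}) as a linear relation among $\psi_{n-1}(y_1)$, $\psi_{n-1}(y_2)$, $\psi_{n-2}(y_1)$, builds the transfer matrices $\mathcal{G}_n$, shows the scalar $\delta_n$ propagates as $\mu_n/\mu_{n-1}$ where $\mu_n$ obeys the recurrence satisfied by \emph{products} of solutions of the three-term recurrence evaluated at $y_1$ and $y_2$, identifies $\mu_n$ as a combination of $P_n(y_i)$, $q_n(y_j)$ products, and then sets $n=0$ to extract a Riccati equation with coefficients rational in $(x,y_1)$, finally symmetrizing under $y_1\leftrightarrow y_2$ to obtain polynomial coefficients. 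Neither this argument (in the style of Magnus) nor any substitute for it appears in your proposal, so the three statements are not shown equivalent.

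A second, quieter gap concerns the clause ``polynomials of uniformly bounded degrees'', which is part of statements (b) and (c), not decoration. Lemma~\ref{Theorem:Magnus} \emph{assumes} the degrees of $A_n,B_n,C_n,D_n$ are uniformly bounded; it does not produce such a bound, so your appeal to it (and likewise your direct induction on (\ref{eq:est-qn-(1)}) via the recurrence) leaves boundedness unproven: the natural propagation formulas, cf. (\ref{eq:Magnus-Dn}) or (\ref{eq:prop-ln-Thetan-3}), raise degrees by up to two per step, since $\Delta_y^2=4r$ and $\mathbb{E}_1(x-\beta)\mathbb{E}_2(x-\beta)$ are quadratic. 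The paper gets the bound from orthogonality through the Pad\'e estimate: in $(a)\Rightarrow(b)$, after multiplying by $\mathbb{E}_1P_n\,\mathbb{E}_2P_n$, the decay $q_n/P_n=\mathcal{O}(x^{-2n-1})$ forces $\deg(\hat{\Theta}_{n-1})\leq\max\{\deg(A)-2,\deg(B)-2,\deg(C)-1\}$ in (\ref{eq:theta-hat}), and the splitting into the two relations then uses the Casorati identity (\ref{eq:liouville}) and the coprimality of $\mathbb{E}_1P_{n-1}^{(1)}$ and $\mathbb{E}_1P_n$; an induction that never uses the moment conditions cannot see this. To your credit, several pieces are sound and match the paper: your $(c)\Rightarrow(a)$ at $n=0$ with the even/odd separation in $\sqrt{r}$, your $(c)\Rightarrow(b)$ substitution (where indeed the $\mathbb{E}_1S\,\mathbb{E}_2S$ and $\mathbb{E}_2S$ terms cancel after eliminating $\mathbb{D}S$, so only the irrationality of $S$ is needed, which follows from the Hankel condition rather than from the decay of $q_n$ as you state), and the $y_1\leftrightarrow y_2$ involution, which the paper also exploits. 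But without $(b)\Rightarrow(a)$ and the degree bounds, the theorem is not proved.
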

The proof of Theorem \ref{teo2} will be given in section 4.

\vspace{0.1cm}

\noindent{\it{Remark }}.\; The characterizations stated in Theorem
\ref{teo2} are not uniquely represented. One can also deduce that
the
following statements (a), (b), (c) are equivalent:\\
(a)\; $S$ satisfies the Riccati equation (\ref{eq:defi-Ric-S}), $$A
\mathbb{D}S =B \mathbb{E}_1S\,\mathbb{E}_2S+C\mathbb{M}S+D\,;$$
 (b)\; $P_n$ and  $P_{n}^{(1)}$ satisfy the difference relations,
 for all $n \geq 1,$
\begin{equation}
\begin{cases}
A\mathbb{D}P_{n}=(l_{n-1}-\Delta_y
\pi_{n-1})\mathbb{E}_2P_{n}-\displaystyle
C/2\,\mathbb{E}_1P_{n}-B\mathbb{E}_1P^{(1)}_{n-1}+\Theta_{n-1}\mathbb{E}_2P_{n-1}\,,
\vspace{0.2cm}\\
A\mathbb{D}P^{(1)}_{n-1}=(l_{n-1}-\Delta_y
\pi_{n-1})\mathbb{E}_2P_{n-1}^{(1)}+\displaystyle
C/2\,\mathbb{E}_1P^{(1)}_{n-1}+D\mathbb{E}_1P_{n}
+\Theta_{n-1}\mathbb{E}_2P_{n-2}^{(1)}\,;
\end{cases}\label{eq:est-psin1-(2)}
\end{equation}
(c)\; $q_n$ satisfies
\begin{equation} A \mathbb{D}q_{n}
   =(l_{n-1}-\Delta_y \pi_{n-1})\,\mathbb{E}_2 q_{n}+\left(B\mathbb{E}_2S+ C/2\right)\mathbb{E}_1q_{n}+ \Theta_{n-1}\,\mathbb{E}_2q_{n-1}\,,
   \;\;
   n\geq0\,.
\label{eq:est-qn-(2)}
\end{equation}

Therefore, we deduce the result that follows.
\begin{Theorem}\label{Theorem:teo-gather}
Let $S$ be a Stieltjes function satisfying the Riccati equation
(\ref{eq:defi-Ric-S}),
$$A \mathbb{D}S =B \mathbb{E}_1S\,\mathbb{E}_2S+C\mathbb{M}S+D\,,$$
where $A, B, C, D$ are polynomials.   Let $\{P_n\}_{n\geq 0}$ be the
SMOP related to $S$, and let  $\{P_{n}^{(1)}\}_{n\geq 0}$,
$\{q_n\}_{n\geq 0}$ be the sequence of associated polynomials of the
first kind and the sequence of functions of the second kind,
respectively. The following relations hold, for all $n\geq 0$:
\begin{eqnarray}
&&A_{n+1}\mathbb{D}P_{n+1}=(l_n-C/2)\mathbb{M}P_{n+1}-B\mathbb{M}P_{n}^{(1)}+\Theta_n\mathbb{M}P_n\,,\label{eq:est-conseq-Pn}\\
&&A_{n+1}\mathbb{D}P^{(1)}_{n}=(l_n+C/2)\mathbb{M}P^{(1)}_{n}+D\mathbb{M}P_{n+1}+\Theta_n\mathbb{M}P^{(1)}_{n-1}\,,\label{eq:est-conseq-Pn1}\\
&&A_n\mathbb{D}q_{n}=(l_{n-1}+C/2)\mathbb{M}q_{n}+B\left(2\mathbb{M}S\,
\mathbb{M}q_n-\mathbb{M}(Sq_n)\right)+\Theta_{n-1}\mathbb{M}q_{n-1}
\,,\label{eq:est-qn-conseq}
\end{eqnarray}
with $A_n, l_n, \Theta_n, \pi_{n}$ uniformly bounded degree
polynomials, $A_n$ given by $A_n=A+\frac{\Delta^2_y}{2}\,\pi_{n-1}$.
\end{Theorem}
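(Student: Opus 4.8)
The plan is to obtain the three $\mathbb{M}$-relations by \emph{adding} the two equivalent representations of the structure relations that become available once $S$ is Laguerre--Hahn: those of Theorem~\ref{teo2}, equations (\ref{eq:est-psin1-(1)}) and (\ref{eq:est-qn-(1)}), and those of the Remark, equations (\ref{eq:est-psin1-(2)}) and (\ref{eq:est-qn-(2)}). Since both sets hold with the \emph{same} auxiliary polynomials $l_{n-1},\pi_{n-1},\Theta_{n-1}$, the two versions of each relation may be summed termwise. The two elementary facts that make the sum collapse onto $\mathbb{M}$-quantities are the definition of $\mathbb{D}$, which gives $\mathbb{E}_2f-\mathbb{E}_1f=\Delta_y\,\mathbb{D}f$ (equivalently $\mathbb{E}_1f-\mathbb{E}_2f=-\Delta_y\,\mathbb{D}f$), and the definition of $\mathbb{M}$, which gives $\mathbb{E}_1f+\mathbb{E}_2f=2\mathbb{M}f$.

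First I would add the two relations for $P_n$. The diagonal coefficient produces $l_{n-1}(\mathbb{E}_1P_n+\mathbb{E}_2P_n)=2l_{n-1}\mathbb{M}P_n$; the off-diagonal coefficient produces $\Delta_y\pi_{n-1}(\mathbb{E}_1P_n-\mathbb{E}_2P_n)=-\Delta_y^2\pi_{n-1}\mathbb{D}P_n$; and the $C$-, $B$- and $\Theta$-terms each combine into $\mathbb{M}$-quantities by the same two identities. Collecting the two $\mathbb{D}P_n$ contributions on the left yields the coefficient $2A+\Delta_y^2\pi_{n-1}$, so that after dividing by $2$ and absorbing the correction into $A_n=A+\frac{\Delta_y^2}{2}\pi_{n-1}$ one recovers (\ref{eq:est-conseq-Pn}) (after the index shift $n\to n+1$). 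The relation (\ref{eq:est-conseq-Pn1}) for $P_n^{(1)}$ follows in the same way from the second lines of (\ref{eq:est-psin1-(1)}) and (\ref{eq:est-psin1-(2)}), the only difference being the sign of the $C$-term, which lands with coefficient $l_{n-1}+C/2$.

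The only step that requires more than those two identities is the $B$-term in the relation for $q_n$. Adding (\ref{eq:est-qn-(1)}) and (\ref{eq:est-qn-(2)}) produces the mixed term $B\bigl(\mathbb{E}_1S\,\mathbb{E}_2q_n+\mathbb{E}_2S\,\mathbb{E}_1q_n\bigr)$, which is not an $\mathbb{E}_1+\mathbb{E}_2$ sum of a single function. Here I would invoke the $\mathbb{M}$-form of identity (\ref{eq:ident-aux-E1E2}) with $f=S$, $g=q_n$, namely $\mathbb{E}_1S\,\mathbb{E}_2q_n+\mathbb{E}_1q_n\,\mathbb{E}_2S=4\mathbb{M}S\,\mathbb{M}q_n-2\mathbb{M}(Sq_n)$, so that the $B$-term becomes exactly $2B\bigl(2\mathbb{M}S\,\mathbb{M}q_n-\mathbb{M}(Sq_n)\bigr)$. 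It is essential to use this form rather than (\ref{eq:identE1E2}): the latter would leave a residual $\mathbb{D}S\,\mathbb{D}q_n$ term that cannot be absorbed into an $S$-free coefficient $A_n$. The remaining terms collapse as before, and after dividing by $2$ and setting $A_n=A+\frac{\Delta_y^2}{2}\pi_{n-1}$ one obtains (\ref{eq:est-qn-conseq}).

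I expect the computations themselves to be routine bookkeeping once the identities are in place; the main points to verify carefully are that both representations share the same auxiliary polynomials $l_{n-1},\pi_{n-1},\Theta_{n-1}$ (which is what licenses the termwise addition) and that the sign in $\mathbb{E}_1f-\mathbb{E}_2f=-\Delta_y\,\mathbb{D}f$ is tracked correctly, so that each off-diagonal piece feeds into $A_n$ rather than cancelling. The uniform boundedness of $\deg A_n,\deg l_n,\deg\Theta_n,\deg\pi_n$ is inherited directly from Theorem~\ref{teo2} and the Remark.
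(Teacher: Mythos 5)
Your proposal is correct and follows essentially the same route as the paper: the paper's proof sums the first equations of (\ref{eq:est-psin1-(1)}) and (\ref{eq:est-psin1-(2)}), divides by $2$, and uses $\Delta_y\,(\mathbb{E}_2P_n-\mathbb{E}_1P_n)=\Delta_y^2\,\mathbb{D}P_n$ to absorb the $\pi_{n-1}$-term into $A_n=A+\frac{\Delta_y^2}{2}\pi_{n-1}$, declaring the other two relations to follow ``by an analogue manner.'' Your treatment of the mixed $B$-term in the $q_n$ case via $\mathbb{E}_1S\,\mathbb{E}_2q_n+\mathbb{E}_2S\,\mathbb{E}_1q_n=4\mathbb{M}S\,\mathbb{M}q_n-2\mathbb{M}(Sq_n)$ correctly fills in the one detail the paper leaves implicit, and your emphasis on the shared auxiliary polynomials $l_{n-1},\pi_{n-1},\Theta_{n-1}$ (established in the proof of Theorem \ref{teo2}) is exactly the fact licensing the termwise addition.
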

\begin{proof}
The sum of the first equation  in  (\ref{eq:est-psin1-(1)}) with the
first equation in (\ref{eq:est-psin1-(2)}) and the division of the
resulting equation by $2$ gives us
$$A\mathbb{D}P_{n}=(l_{n-1}-C/2)\mathbb{M}P_{n}-\frac{\pi_{n-1}}{2}\Delta_y \, (\mathbb{E}_2P_n-\mathbb{E}_1P_n)-B\mathbb{M}P_{n-1}^{(1)}
+\Theta_{n-1}\mathbb{M}P_{n-1}\,, \quad n \geq 1\,. $$ Note that as
$\Delta_y=y_2-y_1$, then
$$ \Delta_y\, (\mathbb{E}_2P_n-\mathbb{E}_1P_n)=\Delta_y^2\,\mathbb{D}P_n\,.$$
Thus, we obtain
$$(A+\frac{\Delta^2_y}{2}\,\pi_{n-1})\mathbb{D}P_{n}=(l_{n-1}-C/2)\mathbb{M}P_{n}-B\mathbb{M}P_{n-1}^{(1)}
+\Theta_{n-1}\mathbb{M}P_{n-1}\,, \quad n \geq 1\,,$$ hence
(\ref{eq:est-conseq-Pn}).

The equations (\ref{eq:est-conseq-Pn1}) and (\ref{eq:est-qn-conseq})
follow by an analogue manner.
\end{proof}

\noindent{\it{Remark }}.\; The equations
(\ref{eq:est-conseq-Pn})-(\ref{eq:est-qn-conseq}) extend the ones
given in \cite{witte} for the semi-classical case.

\begin{Corollary}
 The polynomials $l_n, \Theta_n$ of Theorems \ref{teo2} and \ref{Theorem:teo-gather} satisfy,
 for all $n\geq 0,$
\begin{eqnarray}
&&\pi_{n+1}+\pi_{n}=-\frac{\Theta_n}{2\gamma_{n+1}}-\sum_{k=0}^{n}\frac{\Theta_{k-1}}{\gamma_k}\,,\label{eq:prop-ln-Thetan-1}\\
&&l_{n+1}+l_{n}+\mathbb{M}(x-\beta_{n+1})\frac{\Theta_{n}}{\gamma_{n+1}}=0\,,
\label{eq:prop-ln-Thetan-2}\\
&&A+\frac{\Delta_y^2}{2}(\pi_{n}+\pi_{n-1})+\frac{\Theta_{n-1}}{\gamma_n}\left(-\frac{\Delta_y^2}{4}+\gamma_{n+1}-\mathbb{M}(x-\beta_n)\mathbb{M}(x-\beta_{n+1})\right)
\nonumber\\
&&\hspace{0.5cm}+\frac{\Theta_{n}}{\gamma_{n+1}}\mathbb{E}_1(x-\beta_{n+1})\mathbb{E}_2(x-\beta_{n+1})+\mathbb{M}(x-\beta_{n+1})(l_n-l_{n-1})
=\Theta_{n+1}\,,\hspace{0.5cm} \label{eq:prop-ln-Thetan-3}
\end{eqnarray}
with initial conditions $\pi_{-1}=0,$ $\pi_{0}=-D/2,\; l_{-1}=C/2,\;
l_{0}=-\mathbb{M}(x-\beta_{0})D-C/2,\;$ $\Theta_{-1}=D,\;$ $
\Theta_0=A-\frac{\Delta_y^2}{4}D-(l_0-C/2)\mathbb{M}(x-\beta_{0})+B$.
\end{Corollary}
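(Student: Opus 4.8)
The plan is to identify the structure coefficients $l_n,\pi_n,\Theta_n$ with the coefficients of the Riccati difference equation satisfied by the ratios $g_n=q_{n+1}/q_n$ of consecutive functions of the second kind, and then to read the three relations off Lemma~\ref{Theorem:Magnus}. Since $\{q_n\}_{n\ge-1}$ obeys \eqref{eq:ttrr-qn} with $q_{-1}=1$, one has $g_{-1}=q_0/q_{-1}=S$, so the whole family $\{g_n\}$ falls under Lemma~\ref{Theorem:Magnus}, the base term $g_{-1}=S$ carrying the Riccati data $(A,B,C,D)$ of \eqref{eq:defi-Ric-S}. Evaluating \eqref{eq:est-qn-(1)} at $n=0$ (where $q_0=S$, $q_{-1}=1$) and matching it with \eqref{eq:defi-Ric-S} already forces the base values $l_{-1}=C/2$, $\pi_{-1}=0$, $\Theta_{-1}=D$.

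First I would manufacture an explicit Riccati equation for $g_n$ out of \eqref{eq:est-qn-(1)} written at levels $n$ and $n+1$. Computing $\mathbb{D}g_n$ through the $\mathbb{E}_2$-form of the quotient rule \eqref{eq:D-quoc-E2}, multiplying by $A\,\mathbb{E}_1q_n\,\mathbb{E}_2q_n$, and inserting the two structure relations makes the nonlinear term $(B\mathbb{E}_1S+C/2)$ drop out identically, since it multiplies $\mathbb{E}_2q_{n+1}\mathbb{E}_2q_n-\mathbb{E}_2q_n\mathbb{E}_2q_{n+1}=0$. Using $g_n=(x-\beta_n)-\gamma_n/g_{n-1}$ from \eqref{eq:ttrr-qn} to replace $1/\mathbb{E}_1g_{n-1}$ by $(\mathbb{E}_1(x-\beta_n)-\mathbb{E}_1g_n)/\gamma_n$ then turns the $\Theta_{n-1}$-term into a genuine quadratic term, and the outcome is
\[
A\,\mathbb{D}g_n=\tfrac{\Theta_{n-1}}{\gamma_n}\,\mathbb{E}_1g_n\,\mathbb{E}_2g_n+a_n\mathbb{E}_1g_n+b_n\mathbb{E}_2g_n+\Theta_n,
\]
with $a_n=l_n+\Delta_y\pi_n$ and $b_n=-(l_{n-1}+\Delta_y\pi_{n-1})-\tfrac{\Theta_{n-1}}{\gamma_n}\mathbb{E}_1(x-\beta_n)$.

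Next I would put this into the canonical form \eqref{eq:ric-gn} via $\mathbb{E}_1g_n=\mathbb{M}g_n-\tfrac{\Delta_y}{2}\mathbb{D}g_n$ and $\mathbb{E}_2g_n=\mathbb{M}g_n+\tfrac{\Delta_y}{2}\mathbb{D}g_n$, which gives $a_n\mathbb{E}_1g_n+b_n\mathbb{E}_2g_n=(a_n+b_n)\mathbb{M}g_n+\tfrac{\Delta_y}{2}(b_n-a_n)\mathbb{D}g_n$; moving the $\mathbb{D}g_n$-part to the left identifies the Riccati coefficients of $g_n$ as $A_n=A-\tfrac{\Delta_y}{2}(b_n-a_n)$, $B_n=\Theta_{n-1}/\gamma_n$, $C_n=a_n+b_n$, $D_n=\Theta_n$. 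The decisive point is that $A_n$ and $C_n$ are polynomials, so every term carrying an odd power of $\Delta_y=2\sqrt r$ must cancel. Forcing the $\Delta_y$-part of $C_n$ to vanish yields the first-order relation $\pi_n-\pi_{n-1}=-\Theta_{n-1}/(2\gamma_n)$, which telescopes with $\pi_{-1}=0$ to \eqref{eq:prop-ln-Thetan-1}; forcing the $\Delta_y$-part of $A_n$ to vanish is exactly \eqref{eq:prop-ln-Thetan-2}, and leaves $A_n=A+\tfrac{\Delta_y^2}{2}(\pi_n+\pi_{n-1})-\tfrac{\Delta_y^2}{4}\tfrac{\Theta_{n-1}}{\gamma_n}$.

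Finally, \eqref{eq:Magnus-Bn} forces $\varrho_n=1$ (as $B_{n+1}=\Theta_n/\gamma_{n+1}=\varrho_nD_n/\gamma_{n+1}$), and substituting the polynomial coefficients $A_n,B_n,C_n,D_n$ above into the $D$-recurrence \eqref{eq:Magnus-Dn}, where $D_{n+1}=\Theta_{n+1}$, reproduces \eqref{eq:prop-ln-Thetan-3} verbatim. The remaining initial data come from the same identities at low indices: \eqref{eq:prop-ln-Thetan-2} and the first-order $\pi$-relation at $n=0$ give $l_0=-\mathbb{M}(x-\beta_0)D-C/2$ and $\pi_0=-D/2$, while $\Theta_0$ is \eqref{eq:Magnus-Dn} at $n=-1$, i.e. $\Theta_0=A+B+\mathbb{M}(x-\beta_0)C+\mathbb{E}_1(x-\beta_0)\mathbb{E}_2(x-\beta_0)D$, which matches the stated value once $\mathbb{E}_1(x-\beta_0)\mathbb{E}_2(x-\beta_0)=\mathbb{M}(x-\beta_0)^2-\Delta_y^2/4$ is used. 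The one genuinely delicate step is the derivation of the Riccati for $g_n$: choosing the $\mathbb{E}_2$-quotient rule so the $S$-dependent term disappears and using the recurrence to create the $\mathbb{E}_1g_n\mathbb{E}_2g_n$ term; once the coefficients are in hand, splitting them into polynomial and $\sqrt r$-parts and appealing to Lemma~\ref{Theorem:Magnus} is routine bookkeeping.
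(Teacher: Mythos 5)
Your skeleton coincides with the paper's own proof for most of its length: the cross-multiplication of \eqref{eq:est-qn-(1)} at levels $n$ and $n+1$, the use of the quotient rule \eqref{eq:D-quoc-E2} (which indeed kills the $B\mathbb{E}_1S+C/2$ term), and the recurrence substitution creating the quadratic term reproduce exactly the paper's intermediate equation \eqref{eq:aux4-1}; the endgame via Lemma \ref{Theorem:Magnus}, with $\varrho_n=1$ forced by \eqref{eq:Magnus-Bn} and \eqref{eq:Magnus-Dn} yielding \eqref{eq:prop-ln-Thetan-3}, is also the paper's. The genuine gap is the step you yourself call decisive: ``$A_n$ and $C_n$ are polynomials, so every term carrying an odd power of $\Delta_y$ must cancel.'' Nothing in your derivation shows these coefficients are polynomials: what you have is a single identity whose coefficients a priori lie in $\mathbb{C}[x]+\sqrt{r}\,\mathbb{C}[x]$, and setting the odd parts to zero is legitimate only if $\mathbb{D}g_n$ and $\mathbb{M}g_n$ are linearly independent over $\mathbb{C}(x)$. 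Concretely, subtracting from your equation its conjugate under $y_1\leftrightarrow y_2$ (i.e.\ $\sqrt{r}\mapsto-\sqrt{r}$; the conjugate is valid, being what \eqref{eq:est-qn-(2)} produces) leaves $\mathcal{A}'\,\mathbb{D}g_n=\mathcal{C}'\,\mathbb{M}g_n$ with $\mathcal{A}',\mathcal{C}'$ the odd parts, and you must prove $\mathcal{A}'=\mathcal{C}'=0$ rather than assume it. Since your first-order relations $\pi_n-\pi_{n-1}=-\Theta_{n-1}/(2\gamma_n)$ and $l_n+l_{n-1}+\mathbb{M}(x-\beta_n)\Theta_{n-1}/\gamma_n=0$, the initial values $\pi_0=-D/2$, $l_0$, and hence \eqref{eq:prop-ln-Thetan-1}--\eqref{eq:prop-ln-Thetan-2} all rest on this one step, the gap is load-bearing. (Those relations are in fact true --- they are equivalent to the corollary --- but that is not a derivation.)

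The paper sidesteps the issue: it derives the conjugate equation \eqref{eq:aux4-2} honestly from the second family \eqref{eq:est-qn-(2)} and \emph{adds}, so the odd terms cancel identically and the Riccati coefficients are manifestly polynomial; but note that these then contain only the combinations $\pi_n+\pi_{n-1}$ and $l_n-l_{n-1}$ (cf.\ \eqref{eq:ric-gn-qA}), so Lemma \ref{Theorem:Magnus} delivers only second-order recurrences, and the telescoping constants are pinned down by separate low-index evaluations: $\pi_0=-D/2$ from the second equation of \eqref{eq:est-psin1-(1)} at $n=1$, where the $\sqrt{r}$-splitting is applied to an identity between \emph{polynomials} and $\sqrt{r}$ times polynomials (there the irrationality of $\sqrt{r}$ makes it airtight), $l_0$ from \eqref{eq:est-conseq-Pn1} at $n=0$, and $\Theta_0$ from \eqref{eq:est-conseq-Pn} at $n=0$. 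To repair your argument, either import the paper's symmetrization plus these low-index evaluations, or supply an independence proof that no nontrivial relation $\mathcal{A}'\mathbb{D}g_n=\mathcal{C}'\mathbb{M}g_n$ with polynomial coefficients can hold (for instance via the same uniqueness-up-to-polynomial-factor of the Riccati equation that underlies the $\varrho_n$ step of Lemma \ref{Theorem:Magnus}, together with $\Theta_{n-1}\not\equiv0$). Your alternative computation of $\Theta_0$ from \eqref{eq:Magnus-Dn} at index $-1$ is arithmetically correct and matches the stated value via $\mathbb{E}_1(x-\beta_0)\mathbb{E}_2(x-\beta_0)=(\mathbb{M}(x-\beta_0))^2-\Delta_y^2/4$, though it tacitly needs $\varrho_{-1}=1$, i.e.\ $D\not\equiv0$ in \eqref{eq:Magnus-Bn} --- the same kind of implicit nondegeneracy the paper itself uses at other indices.
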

\begin{proof}
 Multiply
(\ref{eq:est-qn-(1)}), written to $n+1$, by $\mathbb{E}_2q_n$, and
subtract to (\ref{eq:est-qn-(1)}) multiplied by
$\mathbb{E}_2q_{n+1}$. Then, multiply the resulting equation by
$1/(\mathbb{E}_1q_n\mathbb{E}_2q_n)$, thus obtaining
\begin{multline}
A\,\mathbb{D}\left(\frac{q_{n+1}}{q_n}\right)=l_n\mathbb{E}_1\left(\frac{q_{n+1}}{q_n}\right)-l_{n-1}\mathbb{E}_2\left(\frac{q_{n+1}}{q_n}\right)
+\Theta_n\\-\Theta_{n-1}\mathbb{E}_1\left(\frac{q_{n-1}}{q_n}\right)\mathbb{E}_2\left(\frac{q_{n+1}}{q_n}\right)+
\Delta_y\left\{\pi_n\mathbb{E}_1\left(\frac{q_{n+1}}{q_n}\right)-\pi_{n-1}\mathbb{E}_2\left(\frac{q_{n+1}}{q_n}\right)
\right\}\,, \label{cor:aux1}
\end{multline}
where we used  the property (\ref{eq:D-quoc-E2}).

 From the recurrence relation for
$q_n$ one has
$\displaystyle\frac{q_{n-1}}{q_n}=\frac{(x-\beta_n)}{\gamma_n}-\frac{1}{\gamma_n}\frac{q_{n+1}}{q_n}\,,$
thus
\begin{equation}
\mathbb{E}_1\left(\frac{q_{n-1}}{q_n}\right)=\frac{1}{\gamma_n}\mathbb{E}_1(x-\beta_n)-\frac{1}{\gamma_n}\mathbb{E}_1\left(\frac{q_{n+1}}{q_n}\right)
\,. \label{eq:cor-aux2}
\end{equation}
Let us substitute (\ref{eq:cor-aux2}) in (\ref{cor:aux1}). Using the
notation $g_{n}=\displaystyle q_{n+1}/{q_n}$, the resulting equation
is given by
\begin{multline}
A\,\mathbb{D}g_n=l_n\mathbb{E}_1g_n-l_{n-1}\mathbb{E}_2g_n
+\Theta_n-\frac{\Theta_{n-1}}{\gamma_n}\left(\mathbb{E}_1(x-\beta_n)-\mathbb{E}_1g_n\right)
\mathbb{E}_2g_n\\+\Delta_y\left(\pi_n\mathbb{E}_1g_n-\pi_{n-1}\mathbb{E}_2g_n\right)\,.
\label{eq:aux4-1}
\end{multline}

On the other hand, if we proceed  analogously as above, but now
using  (\ref{eq:est-qn-(2)}) and the property (\ref{eq:D-quoc-E1}),
we obtain
\begin{multline}
A\,\mathbb{D}g_n=l_n\mathbb{E}_2g_n-l_{n-1}\mathbb{E}_1g_n
+\Theta_n-\frac{\Theta_{n-1}}{\gamma_n}\left(\mathbb{E}_2(x-\beta_n)-\mathbb{E}_2g_n\right)
\mathbb{E}_1g_n\\-\Delta_y\left(\pi_n\mathbb{E}_2g_n-\pi_{n-1}\mathbb{E}_1g_n\right)\,.
\label{eq:aux4-2}
\end{multline}
By summing  (\ref{eq:aux4-1}) with (\ref{eq:aux4-2}) and by dividing
the resulting equation by $2$ we obtain
\begin{multline}
A\,\mathbb{D}g_n
=(l_n-l_{n-1})\mathbb{M}g_n+\Theta_n-\frac{\Theta_{n-1}}{2\gamma_n}\left(\mathbb{E}_1(x-\beta_n)\,\mathbb{E}_2g_n+\mathbb{E}_2(x-\beta_n)\,
\mathbb{E}_1g_n\right)\\+\frac{\Theta_{n-1}}{\gamma_n}\mathbb{E}_1g_n\mathbb{E}_2g_n
-(\pi_{n-1}+\pi_{n})\,\frac{\Delta_y}{2}\,\left(\mathbb{E}_2g_n-\mathbb{E}_1g_n\right)\,.
\label{cor:aux4-3}
\end{multline}

Taking into account  (\ref{eq:identE1E2}) one has
\begin{equation}\mathbb{E}_1(x-\beta_n)\,\mathbb{E}_2g_n+\mathbb{E}_2(x-\beta_n)\,
\mathbb{E}_1g_n=2\mathbb{M}(x-\beta_n)\,\mathbb{M}g_n-\frac{\Delta_y^2}{2}\mathbb{D}g_n\,,\label{eq:aux-E1E2-g}\end{equation}
where it was used $\mathbb{D}(x-\beta_n)=1.$ The use of
(\ref{eq:aux-E1E2-g}) as well as the use of
$$\Delta_y\,\left(\mathbb{E}_2g_n-\mathbb{E}_1g_n\right)=\Delta_y^2\mathbb{D}g_n$$
in (\ref{cor:aux4-3}) yields the Riccati equation
\begin{equation}
A_n\,\mathbb{D}g_{n}=B_n\mathbb{E}_1g_n\,\mathbb{E}_2g_n+C_n\mathbb{M}g_n+D_n
\label{cor:aux-5}\nonumber
\end{equation}
with
\begin{eqnarray}
&&A_n=A+\frac{\Delta_y^2}{2}\left(\pi_{n}+\pi_{n-1}-\frac{\Theta_{n-1}}{2\gamma_n}\right)\,,\label{eq:ric-gn-qA}\\
&&B_n=\frac{\Theta_{n-1}}{\gamma_n}\,,\\
&&C_n=l_n-l_{n-1}-\mathbb{M}(x-\beta_n)\frac{\Theta_{n-1}}{\gamma_n}\,,\\
&&D_n=\Theta_n\,.\label{eq:ric-gn-qD}
\end{eqnarray}

 Taking into account the Lemma \ref{Theorem:Magnus}, for each $n \geq 0$,
there exists a polynomial, $\varrho_n$, such that  the above
polynomials $A_n, B_n, C_n, D_n$ satisfy
(\ref{eq:Magnus-An})-(\ref{eq:Magnus-Dn}). Recall the equation
(\ref{eq:Magnus-Bn}), $B_{n+1}=\varrho_n D_n/\gamma_{n+1}$, which
yields $\Theta_n/\gamma_{n+1}=\varrho_n \Theta_n/\gamma_{n+1}$, thus
$\varrho_n=1.$ Therefore, from (\ref{eq:Magnus-An}),
(\ref{eq:Magnus-Cn})  and (\ref{eq:Magnus-Dn}) with $\varrho_n=1$ we
obtain, respectively, for all $n \geq 0$:
\begin{eqnarray}
&&\pi_{n+1}+\frac{\Theta_{n}}{2\gamma_{n+1}}=\pi_{n-1}-\frac{\Theta_{n-1}}{2\gamma_{n}}\,,\label{eq:result-An-1}\\
&&l_{n+1}+\mathbb{M}(x-\beta_{n+1})\frac{\Theta_{n}}{\gamma_{n+1}}=l_{n-1}+\mathbb{M}(x-\beta_n)\frac{\Theta_{n-1}}{\gamma_n}\,,\label{eq:result-Cn-1}\\
&&A+\frac{\Delta_y^2}{2}(\pi_{n}+\pi_{n-1})+\frac{\Theta_{n-1}}{\gamma_n}\left(-\frac{\Delta_y^2}{4}+\gamma_{n+1}-\mathbb{M}(x-\beta_n)\mathbb{M}(x-\beta_{n+1})\right)
\nonumber\\
&&\hspace{0.5cm}+\frac{\Theta_{n}}{\gamma_{n+1}}\mathbb{E}_1(x-\beta_{n+1})\mathbb{E}_2(x-\beta_{n+1})+\mathbb{M}(x-\beta_{n+1})(l_n-l_{n-1})
=\Theta_{n+1}\,.\hspace{0.5cm}
 \label{eq:result-Dn-1}
\end{eqnarray}

Note that (\ref{eq:result-Dn-1}) is (\ref{eq:prop-ln-Thetan-3}). It
only remains to analyze (\ref{eq:result-An-1}) and
(\ref{eq:result-Cn-1}). Let us add $\pi_n$ in both hand sides of
(\ref{eq:result-An-1}), and add $l_n$ in both hand sides of
(\ref{eq:result-Cn-1}). We obtain, respectively,
\begin{equation}T_{n+1}=T_n-\frac{\Theta_{n-1}}{\gamma_n}\,,\;\;\;\;
L_{n+1}=L_n\,,\quad n\geq 0\,,\label{eq:telesc}\end{equation} with
$$T_{n}=\pi_{n}+\pi_{n-1}+\frac{\Theta_{n-1}}{2\gamma_{n}}\,,
\;\;\;L_{n}=l_{n}+l_{n-1}
+\mathbb{M}(x-\beta_{n})\frac{\Theta_{n-1}}{\gamma_{n}}\,,\quad n
\geq 0\,.$$ From (\ref{eq:telesc}) we obtain
$$T_{n+1}=T_0-\sum_{k=0}^{n}\frac{\Theta_{k-1}}{\gamma_k}\,,
\;\;\;\; L_{n+1}=L_0\,.$$ Note that $T_0=\pi_{0}+D/2,\;
L_0=l_0+C/2+\mathbb{M}(x-\beta_{0})D,$ as $ \pi_{-1}=0,
\Theta_{-1}=D, \gamma_0=1, l_{-1}=C/2.$

Putting $n=1$ in the second equation of (\ref{eq:est-psin1-(1)}) we
obtain $\Delta_y \pi_0+(y_2-\beta_0)D=-l_0-C/2,$ which, using
(\ref{eq:y1-y2}), gives $\sqrt{r}(2\pi_0+D)=-(p-\beta_0)D-l_0-C/2,$
thus, $2\pi_0+D=-(p-\beta_0)D-l_0-C/2=0,$ hence $\pi_0=-D/2$. Thus,
 $T_0=0$, and (\ref{eq:prop-ln-Thetan-1}) follows.

Putting $n=0$ in (\ref{eq:est-conseq-Pn1}) we get  $L_0=0,$ and
(\ref{eq:prop-ln-Thetan-2})
 follows.

To find $\Theta_0$ put $n=0$ in (\ref{eq:est-conseq-Pn}).
\end{proof}

\section{Proof of Theorem \ref{teo2}}\label{sec:4}

\vspace{0.2cm}

\noindent{\bf{Proof\; of\;$(a) \Rightarrow (b)$.}}\\
If we use  $S=\displaystyle
\frac{q_{n}}{P_{n}}+\frac{P_{n-1}^{(1)}}{P_{n}},\;\; n\geq 1$ (cf.
(\ref{eq:defi-qn})), then (\ref{eq:defi-Ric-S}) yields
\begin{equation}
M_n=-A\mathbb{D}\left(\displaystyle
\frac{P^{(1)}_{n-1}}{P_{n}}\right)+B\mathbb{E}_1\left(\displaystyle
\frac{P^{(1)}_{n-1}}{P_{n}}\right)\mathbb{E}_2\left(\displaystyle
\frac{P^{(1)}_{n-1}}{P_{n}}\right)+C\mathbb{M}\left(\displaystyle
\frac{P^{(1)}_{n-1}}{P_{n}}\right)+D\,,\label{eq:teo1-aux1}
\end{equation}
where \begin{multline*}
M_n=A\mathbb{D}\left(\displaystyle\frac{q_{n}}{P_{n}}\right)-B\left[\mathbb{E}_1\left(\displaystyle
\frac{q_{n}}{P_{n}}\right)\mathbb{E}_2\left(\displaystyle
\frac{q_{n}}{P_{n}}\right)+\mathbb{E}_1\left(\displaystyle
\frac{q_{n}}{P_{n}}\right)\mathbb{E}_2\left(\displaystyle
\frac{P^{(1)}_{n-1}}{P_{n}}\right)+\right.\\
\phantom{ola}\hspace{3.5cm}\left.\mathbb{E}_1\left(\displaystyle
\frac{P^{(1)}_{n-1}}{P_{n}}\right)\mathbb{E}_2\left(\displaystyle
\frac{q_{n}}{P_{n}}\right)\right] -C\mathbb{M}\left(\displaystyle
\frac{q_{n}}{P_{n}}\right)\,.\end{multline*}

 By
multiplying both hand sides of (\ref{eq:teo1-aux1}) by
$\mathbb{E}_1P_{n}\,\mathbb{E}_2P_{n}$ and using the properties
(\ref{eq:M-quoc}) and (\ref{eq:D-quoc-E1}), we obtain
\begin{multline*}
M_n\mathbb{E}_1P_{n}\,\mathbb{E}_2P_{n}=
-A\mathbb{D}P_{n-1}^{(1)}\,\mathbb{E}_1P_{n}+A\mathbb{D}P_{n}\,\mathbb{E}_1P^{(1)}_{n-1}+B\mathbb{E}_1P^{(1)}_{n-1}\,\mathbb{E}_2P_{n-1}^{(1)}\\
+\frac{C}{2}\left(\mathbb{E}_1P^{(1)}_{n-1}\,\mathbb{E}_2P_{n}+\mathbb{E}_1P_{n}\,\mathbb{E}_2P_{n-1}^{(1)}\right)+D\mathbb{E}_1P_{n}\,\mathbb{E}_2P_{n}\,.
\end{multline*}
Taking into account the Lemma \ref{eq:lemma-ident-aux}, there holds
that
$-\mathbb{D}P_{n-1}^{(1)}\,\mathbb{E}_1P_{n}+\mathbb{D}P_{n}\,\mathbb{E}_1P^{(1)}_{n-1}$,
$\mathbb{E}_1P^{(1)}_{n-1}\,\mathbb{E}_2P_{n-1}^{(1)},
\mathbb{E}_1P^{(1)}_{n-1}\,\mathbb{E}_2P_{n}+\mathbb{E}_1P_{n}\,\mathbb{E}_2P_{n-1}^{(1)}$
and $\mathbb{E}_1P_{n}\,\mathbb{E}_2P_{n} $ are polynomials (in
$x$), thus the above right hand side is a polynomial (in $x$).
Hence, let us write
\begin{multline}
-A\mathbb{D}P_{n-1}^{(1)}\mathbb{E}_1P_{n}+A\mathbb{D}P_{n}\mathbb{E}_1P^{(1)}_{n-1}+B\mathbb{E}_1P^{(1)}_{n-1}\mathbb{E}_2P_{n-1}^{(1)}\\
+\frac{C}{2}\left(\mathbb{E}_1P^{(1)}_{n-1}\mathbb{E}_2P_{n}+\mathbb{E}_1P_{n}\mathbb{E}_2P_{n-1}^{(1)}\right)+D\mathbb{E}_1P_{n}\mathbb{E}_2P_{n}=
\hat{\Theta}_{n-1}\,.\label{eq:theta-hat}
\end{multline}
Note that  $\hat{\Theta}_{n-1}$ has bounded degree,  as
$(q_n/P_n)(x)=\mathcal{O}(x^{-2n-1}), x \to \infty$. One has
$\deg(\hat{\Theta}_{n-1})\leq \max\{\deg(A)-2, \deg(B)-2,
\deg(C)-1\}.$

Taking into account
$\mathbb{E}_1P_{n-1}^{(1)}\,\mathbb{E}_1P_{n-1}-\mathbb{E}_1P_{n}\,\mathbb{E}_1P_{n-2}^{(1)}=\prod_{k=0}^{n-1}\gamma_k,
\;n\geq 1$ (cf. (\ref{eq:liouville})),  (\ref{eq:theta-hat}) can be
written as
\begin{multline}
-A\mathbb{D}P_{n-1}^{(1)}\,\mathbb{E}_1P_{n}+A\mathbb{D}P_{n}\,\mathbb{E}_1P^{(1)}_{n-1}+B\mathbb{E}_1P^{(1)}_{n-1}\,\mathbb{E}_2P_{n-1}^{(1)}\\
+\frac{C}{2}\left(\mathbb{E}_1P^{(1)}_{n-1}\,\mathbb{E}_2P_{n}+\mathbb{E}_1P_{n}\,\mathbb{E}_2P_{n-1}^{(1)}\right)+D\mathbb{E}_1P_{n}\,\mathbb{E}_2P_{n}
\\=
\Theta_{n-1}\left(\mathbb{E}_1P_{n-1}^{(1)}\,\mathbb{E}_1P_{n-1}-\mathbb{E}_1P_{n}\,\mathbb{E}_1P_{n-2}^{(1)}\right)
\,, \label{eq:theta-n}
\end{multline}
where $\Theta_{n-1}=\hat{\Theta}_{n-1}/\prod_{k=0}^{n-1}\gamma_k$.
Upon re-organizing, (\ref{eq:theta-n}) gives us
\begin{multline*}
\left\{A\mathbb{D}P_{n}+C/2\,\mathbb{E}_2P_{n}+B\mathbb{E}_2P_{n-1}^{(1)}-\Theta_{n-1}\mathbb{E}_1P_{n-1}\right\}\mathbb{E}_1P_{n-1}^{(1)}\\
=\left\{A\mathbb{D}P^{(1)}_{n-1}-C/2\,\mathbb{E}_2P^{(1)}_{n-1}-D\mathbb{E}_2P_{n}-\Theta_{n-1}\mathbb{E}_1P_{n-2}^{(1)}\right\}\mathbb{E}_1P_{n}\,.
\end{multline*}
As $\mathbb{E}_1P_{n-1}^{(1)}$ and $\mathbb{E}_1P_{n}$ do not have
common zeroes, for all $n\geq 1,$ then there exists a function,
$L_{n-1},$ such that
\begin{equation*}
\begin{cases}
A\mathbb{D}P_{n}+C/2\,\mathbb{E}_2P_{n}+B\mathbb{E}_2P_{n-1}^{(1)}-\Theta_{n-1}\mathbb{E}_1P_{n-1}=L_{n-1}\mathbb{E}_1P_{n}\,,\\
A\mathbb{D}P^{(1)}_{n-1}-C/2\,\mathbb{E}_2P^{(1)}_{n-1}-D\mathbb{E}_2P_{n}-\Theta_{n-1}\mathbb{E}_1P_{n-2}^{(1)}={L}_{n-1}\mathbb{E}_1P_{n-1}^{(1)}\,,
\end{cases}
\end{equation*}
where, taking into account (\ref{eq:y1-y2}), there holds
${L}_{n-1}=l_{n-1}+\Delta_y \pi_{n-1},$ with $l_{n-1}, \pi_{n-1}$
polynomials (in $x$). Thus, we have
\begin{equation}
\begin{cases}
A\mathbb{D}P_{n}+C/2\,\mathbb{E}_2P_{n}+B\mathbb{E}_2P_{n-1}^{(1)}-\Theta_{n-1}\mathbb{E}_1P_{n-1}=(l_{n-1}+\Delta_y \pi_{n-1})\mathbb{E}_1P_{n}\,,\\
A\mathbb{D}P^{(1)}_{n-1}-C/2\,\mathbb{E}_2P^{(1)}_{n-1}-D\mathbb{E}_2P_{n}-\Theta_{n-1}\mathbb{E}_1P_{n-2}^{(1)}=(l_{n-1}+\Delta_y
\pi_{n-1})\mathbb{E}_1P_{n-1}^{(1)}\,,
\end{cases}\label{eq:estrutura-E1}
\end{equation}
thus obtaining (\ref{eq:est-psin1-(1)}).

Let us also deduce the equations (\ref{eq:est-psin1-(2)}).

An analogue procedure as above (starting by multiplying both hand
sides of (\ref{eq:teo1-aux1}) by
$\mathbb{E}_1P_{n}\,\mathbb{E}_2P_{n}$, using the properties
(\ref{eq:M-quoc}) and (\ref{eq:D-quoc-E2}), as well as the identity
$\mathbb{E}_2P_{n-1}^{(1)}\,\mathbb{E}_2P_{n-1}-\mathbb{E}_2P_{n}\,\mathbb{E}_2P_{n-2}^{(1)}=\prod_{k=0}^{n-1}\gamma_k
$) yields the equations
\begin{equation}
\begin{cases}
A\mathbb{D}P_{n}+C/2\,\mathbb{E}_1P_{n}+B\mathbb{E}_1P_{n-1}^{(1)}-\Theta_{n-1}\mathbb{E}_2P_{n-1}=(\tilde{l}_{n-1}+\Delta_y \tilde{\pi}_{n-1})\mathbb{E}_2P_{n}\,,\\
A\mathbb{D}P^{(1)}_{n-1}-C/2\,\mathbb{E}_1P^{(1)}_{n-1}-D\mathbb{E}_1P_{n}-\Theta_{n-1}\mathbb{E}_2P_{n-2}^{(1)}=(\tilde{l}_{n-1}+\Delta_y
\tilde{\pi}_{n-1})\mathbb{E}_2P_{n-1}^{(1)}\,,
\end{cases}\label{eq:estrutura-E2}
\end{equation}
with $\tilde{l}_{n-1}, \tilde{\pi}_{n-1}$ polynomials (in $x$).

 Let us now
prove that  $l_{n-1}=\tilde{l}_{n-1}$ and
$\pi_{n-1}=-\tilde{\pi}_{n-1}$.

 To obtain $\pi_{n-1}=-\tilde{\pi}_{n-1}$
 we multiply the first equation of
(\ref{eq:estrutura-E1}) by $\mathbb{E}_2P_{n}$, multiply the first
equation of (\ref{eq:estrutura-E2}) by $\mathbb{E}_1P_{n}$, and add
the resulting equations, thus obtaining
\begin{multline*}A\mathbb{D}P_{n}(\mathbb{E}_1P_{n}+\mathbb{E}_2P_{n})+\frac{C}{2}\left((\mathbb{E}_1P_{n})^2+(\mathbb{E}_2P_{n})^2\right)
+B\left(\mathbb{E}_1(P_{n}P^{(1)}_{n-1})+\mathbb{E}_2(P_{n}P^{(1)}_{n-1})\right)\\-\Theta_{n-1}\left(\mathbb{E}_1P_{n-1}\,\mathbb{E}_2P_{n}
+\mathbb{E}_2P_{n-1}\,\mathbb{E}_1P_{n}\right)=({l}_{n-1}+\tilde{l}_{n-1}+\Delta_y
(\pi_{n-1}+\tilde{\pi}_{n-1}))\mathbb{E}_1P_{n}\,\mathbb{E}_2P_{n}\,.\end{multline*}
Taking into account the Lemma \ref{eq:lemma-ident-aux}, the left
hand side of the above equation,  as well as
$\mathbb{E}_1P_{n}\,\mathbb{E}_2P_{n}$, are polynomials (in $x$).
Consequently, there must hold that
${l}_{n-1}+\tilde{l}_{n-1}+\Delta_y(\pi_{n-1}+\tilde{\pi}_{n-1})$ is
a polynomial in $x$. As $\Delta_y=2\sqrt{r}$, then
$\pi_{n-1}+\tilde{\pi}_{n-1}=0$, that is, $
\pi_{n-1}=-\tilde{\pi}_{n-1}.$

To obtain $l_{n-1}=\tilde{l}_{n-1}$ we interchange $y_1$ with $y_2$
in the first equation of (\ref{eq:estrutura-E1}), thus obtaining
\begin{equation}
A\mathbb{D}P_{n}+C/2\,\mathbb{E}_1P_{n}+B\mathbb{E}_1P_{n-1}^{(1)}-\Theta_{n-1}\mathbb{E}_2P_{n-1}=(l_{n-1}-\Delta_y
\pi_{n-1})\mathbb{E}_2P_{n}\,. \label{eq:auxproof-teo2}
\end{equation}
The comparison between (\ref{eq:auxproof-teo2}) and the first
equation of (\ref{eq:estrutura-E2}), where we use
$\tilde{\pi}_{n-1}=-{\pi}_{n-1}$, gives us
$l_{n-1}=\tilde{l}_{n-1}$. Hence, we obtain
(\ref{eq:est-psin1-(2)}).

 \vspace{0.2cm}

\noindent{\bf{Proof\; of\;$(b) \Rightarrow (a)$.}}\\
 Let us define
$\psi_n=\left[
\begin{matrix}
  P_{n+1} \\
  P_n^{(1)} \\
\end{matrix}
\right].$ From (\ref{eq:ttrr-Pn}) and (\ref{eq:ttrr-Pn1})
 there holds
\begin{equation}\psi_n=(x-\beta_n)\psi_{n-1}-\gamma_n\psi_{n-2}\,, \quad n\geq
1\,,\;\; \psi_{-1}=\begin{bmatrix}1 \\ 0 \end{bmatrix},\;\;
\psi_{0}=\begin{bmatrix}x-\beta_0 \\ 1 \end{bmatrix}\,.
\label{eq:rrpsi}\end{equation}

With the notation
$\mathbb{D}\psi_n=\begin{bmatrix}\mathbb{D}P_{n+1}\\
\mathbb{D}P^{(1)}_{n}
\end{bmatrix}, \; \mathbb{E}_j\psi_n=\begin{bmatrix}\mathbb{E}_jP_{n+1}\\
\mathbb{E}_jP^{(1)}_{n}
\end{bmatrix},$ $j=1,2,$   (\ref{eq:est-psin1-(1)}) reads as
\begin{equation}
A\mathbb{D}\psi_{n-1}=(l_{n-1}+\Delta_y
\pi_{n-1})\mathbb{E}_1\psi_{n-1}+\mathcal{C}\,\mathbb{E}_2\psi_{n-1}+\Theta_{n-1}\mathbb{E}_1\psi_{n-2}\,,\;\;
\label{eq:est1-vector}
\end{equation}
where $\mathcal{C}=\begin{bmatrix}-C/2 & -B\\ D & C/2\end{bmatrix}$.
In turn,  (\ref{eq:est1-vector}) reads as
\begin{equation*}
A\left(\frac{\psi_{n-1}(y_2)-\psi_{n-1}(y_1)}{y_2-y_1}\right)=(l_{n-1}+\Delta_y
\pi_{n-1})\psi_{n-1}(y_1)+\mathcal{C}\psi_{n-1}(y_2)+\Theta_{n-1}\psi_{n-2}(y_1)\,,
\end{equation*}
that is,
\begin{equation}
{A}_{n}\psi_{n-1}(y_1)+\mathcal{B}\psi_{n-1}(y_2)={C}_{n}\psi_{n-2}(y_1)\label{eq:aux-teo1-n}
\end{equation}
with the functions $A_n, C_n$ given by
$A_n=-\displaystyle\frac{A}{y_2-y_1}-l_{n-1}-\Delta_y \pi_{n-1},
{C}_{n}=\Theta_{n-1}\,,$ and the matrix $\mathcal{B}$ given by
$\mathcal{B}=\displaystyle\frac{A}{y_2-y_1}I-\mathcal{C}$,  $I$
denoting the identity matrix of order $2.$ Note that $A_n, C_n$, as
well as the entries of $\mathcal{B}$, are rational functions of
fixed
 degrees (independent of $n$) of the variables $x$ and $y_1(x)$ (indeed, $y_2(x)$ can be replaced by $-2(\hat{b}x+\hat{d})/\hat{a}-y_1(x)$).

 Taking $n+1$ in (\ref{eq:aux-teo1-n}) and using the
recurrence relation (\ref{eq:rrpsi}) we get
\begin{equation}
\tilde{{A}}_{n}\psi_{n-1}(y_1)+\tilde{\mathcal{B}}_{n}\psi_{n-1}(y_2)
=\tilde{{C}}_{n}\psi_{n-2}(y_1)+\tilde{\mathcal{D}}_{n}\psi_{n-2}(y_2)\,,\label{eq:aux-teo1-n+1}
\end{equation}
with
\begin{equation*}
\tilde{{A}}_{n}=(y_1-\beta_n){A}_{n+1}-{C}_{n+1},\;
\tilde{\mathcal{B}}_{n}=(y_2-\beta_n)\mathcal{B},\;
\tilde{{C}}_{n}=\gamma_n{A}_{n+1},\;\tilde{\mathcal{D}}_{n}=\gamma_n\mathcal{B}\,.
\end{equation*}

Now, we gather (\ref{eq:aux-teo1-n}) and (\ref{eq:aux-teo1-n+1}) in
the system
\begin{equation}\mathcal{E}_n\begin{bmatrix}\psi_{n-1}(y_1)\\
\psi_{n-1}(y_2)\end{bmatrix}=\mathcal{F}_n\begin{bmatrix}\psi_{n-2}(y_1)\\
\psi_{n-2}(y_2)\end{bmatrix}\,,\label{eq:teo1-aux2}\end{equation}
where $\mathcal{E}_n$ and $\mathcal{F}_n$ are the block matrices
\begin{equation*}
\mathcal{E}_n=\begin{bmatrix}{A}_nI & \mathcal{B}\\
\tilde{{A}}_nI & \tilde{\mathcal{B}}_n\end{bmatrix}\,,\;\; \mathcal{F}_n=\begin{bmatrix}{C}_nI & 0_{2\times 2}\\
\tilde{{C}}_nI & \tilde{\mathcal{D}}_n\end{bmatrix}\,.
\end{equation*}
Note that $\mathcal{E}_n$ is invertible,
\begin{equation}\mathcal{E}_n^{-1}=\displaystyle\frac{1}{\zeta_n}\begin{bmatrix}(y_2-\beta_n)I & -I\\
-\tilde{A}_n\mathcal{B}^{-1} & {A}_n\mathcal{B}^{-1}
\end{bmatrix}\,,\;\; \zeta_n=-(y_1-\beta_n)A_{n+1}+(y_2-\beta_n)A_n+C_{n+1}\,.\label{eq:inv-calE}\end{equation}

 From (\ref{eq:teo1-aux2}) there follows
\begin{equation}\begin{bmatrix}\psi_{n-1}(y_1)\\
\psi_{n-1}(y_2)\end{bmatrix}=\mathcal{G}_n\begin{bmatrix}\psi_{n-2}(y_1)\\
\psi_{n-2}(y_2)\end{bmatrix}\,, \;\;
\mathcal{G}_n=\mathcal{E}^{-1}_n\mathcal{F}_n\,,\label{eq:teobimpla-aux3}\end{equation}
being $\mathcal{G}_n$ an invertible matrix,
 as it is a product of
invertible matrices.

 Take $n+1$ in
(\ref{eq:teobimpla-aux3}).
 On the one hand we have
\begin{equation}
\begin{bmatrix}\psi_{n}(y_1)\\
\psi_{n}(y_2)\end{bmatrix}=
\mathcal{G}_{n+1}\begin{bmatrix}\psi_{n-1}(y_1)\\
\psi_{n-1}(y_2)\end{bmatrix}\label{eq:onehand}
\end{equation}
and, on the other hand, using the three term recurrence relation
(\ref{eq:rrpsi}), we have
\begin{equation}
\begin{bmatrix}\psi_{n}(y_1)\\
\psi_{n}(y_2)\end{bmatrix}=\begin{bmatrix}(y_1-\beta_n)I & 0_{2\times 2}\\
0_{2\times 2} & (y_2-\beta_n)I\end{bmatrix}\begin{bmatrix}\psi_{n-1}(y_1)\\
\psi_{n-1}(y_2)\end{bmatrix}-\gamma_n\begin{bmatrix}\psi_{n-2}(y_1)\\
\psi_{n-2}(y_2)\end{bmatrix}\,.\nonumber
\end{equation}
Using (\ref{eq:teobimpla-aux3}) in the above equation, we get
\begin{equation}
\begin{bmatrix}\psi_{n}(y_1)\\
\psi_{n}(y_2)\end{bmatrix}=\left(\begin{bmatrix}(y_1-\beta_n)I & 0_{2\times 2}\\
0_{2\times 2} & (y_2-\beta_n)I\end{bmatrix}-\gamma_n
\mathcal{G}_n^{-1} \right)\begin{bmatrix}\psi_{n-1}(y_1)\\
\psi_{n-1}(y_2)\end{bmatrix}\,.\label{eq:otherhand}
\end{equation}
Consequently, (\ref{eq:onehand}) and (\ref{eq:otherhand}) yield
\begin{equation}
\mathcal{G}_{n+1}=\begin{bmatrix}(y_1-\beta_n)I & 0_{2\times 2}\\
0_{2\times 2} &
(y_2-\beta_n)I\end{bmatrix}-\gamma_n\mathcal{G}^{-1}_{n}\,.\label{eq:Gn-Gn+1}
\end{equation}

Let us compute $\mathcal{G}_{n}$ as well as $\mathcal{G}^{-1}_{n}$.
From  (\ref{eq:inv-calE}), we obtain
\begin{equation}
\mathcal{G}_{n}=\begin{bmatrix}X_nI & Y_n\mathcal{B}\\
U_n\mathcal{B}^{-1} & V_n I\end{bmatrix}\,,\label{eq:Gn}
\end{equation}
where $X_n, Y_n, U_n, V_n$ are given by
\begin{multline*}
X_n=\left((y_2-\beta_n)C_{n}-\gamma_nA_{n+1}\right)/\zeta_n,
\;Y_n=-\gamma_n/\zeta_n, \; U_n=
(A_n\tilde{C}_n-\tilde{A}_nC_n)/\zeta_n\,,\\ V_n=\gamma_n
A_n/\zeta_n\,.
\end{multline*}
Note that $X_n, Y_n, U_n, V_n$ are again rational functions of fixed
degrees of $x$ and $y_1(x)$.

From (\ref{eq:Gn}) there follows
\begin{equation}
\mathcal{G}^{-1}_{n}=\frac{1}{\delta_n}\begin{bmatrix}V_nI & -Y_n\mathcal{B}\\
-U_n\mathcal{B}^{-1} & X_n I\end{bmatrix}, \quad
\delta_n=X_nV_n-Y_nU_n\,.\label{eq:Gn-inversa}
\end{equation}
 Taking into account (\ref{eq:Gn}) and (\ref{eq:Gn-inversa}), (\ref{eq:Gn-Gn+1})
 yields
\begin{eqnarray}
&&X_{n+1}=(y_1-\beta_n)-\gamma_nV_n/\delta_n\,,\label{eq:rel-X}\\
&&Y_{n+1}=\gamma_nY_n/\delta_n\,, \label{eq:rel-Y}\\
&&U_{n+1}=\gamma_n U_n/\delta_n\, \label{eq:rel-U}\\
&&V_{n+1}=(y_2-\beta_n)-\gamma_n X_n/\delta_n\,. \label{eq:rel-V}
\end{eqnarray}
From (\ref{eq:rel-X})-(\ref{eq:rel-V}) there follows that
$\delta_{n+1}$ (recall $\delta_{n+1}
={X}_{n+1}{V}_{n+1}-{Y}_{n+1}{U}_{n+1}$) satisfies
\begin{equation}
\delta_{n+1}=(y_1-\beta_n)(y_2-\beta_n)-\gamma_n\left((y_1-\beta_n){X}_n+(y_2-\beta_{n}){V}_n\right)\frac{1}{\delta_n}
+\frac{\gamma_n^2}{\delta_n}\,.\label{eq:bimpla-delta-n+1}
\end{equation}

 Now we proceed in analogy with
\cite[Lemma 5.1]{magnus-LHsnul} and
\cite[Theorem~1]{magnus-snul-sc}. Write $\delta_n=\mu_n/\mu_{n-1}$
and use such an expression in (\ref{eq:rel-X}), (\ref{eq:rel-V}) and
(\ref{eq:bimpla-delta-n+1}), thus obtaining
\begin{eqnarray*}
&&\mu_n{X}_{n+1}=(y_1-\beta_n)\mu_n-\gamma_n\mu_{n-1}{V}_n\,,\label{eq:rel-X-2}
\\
&&\mu_n{V}_{n+1}=(y_2-\beta_n)\mu_n-\gamma_n\mu_{n-1}{X}_n\,,\label{eq:rel-V-2}\\
&&\mu_{n+1}=(y_1-\beta_n)(y_2-\beta_n)\mu_n-\gamma_n\left((y_1-\beta_n){X}_n+(y_2-\beta_{n}){V}_n\right)\mu_{n-1}
+\gamma_n^2\mu_{n-1}\,. \hspace{0.6cm}
\end{eqnarray*}
The change of variables $\hat{{X}}_{n+1}=\mu_n{X}_{n+1},\;
 \hat{{V}}_{n+1}=\mu_n{V}_{n+1}\,$
yields the relations
\begin{eqnarray}
&&\hat{{X}}_{n+1}=(y_1-\beta_n)\mu_n-\gamma_n\hat{{V}}_n\,,\nonumber\label{eq:rel-X-3}
\\
&&\hat{{V}}_{n+1}=(y_2-\beta_n)\mu_n-\gamma_n\hat{{X}}_n\,, \nonumber\label{eq:rel-V-3}\\
&&\mu_{n+1}=(y_1-\beta_n)(y_2-\beta_n)\mu_n-\gamma_n\left((y_1-\beta_n)\hat{X}_n+(y_2-\beta_{n})\hat{V}_n\right)+\gamma_n^2\mu_{n-1}.
\hspace{0.6cm}\label{eq:rec-mu}
\end{eqnarray}
Let us emphasize that the above recurrence relations for $\hat{X}_n,
\hat{V}_n$ and $\mu_n$ are precisely the recurrence relations
satisfied by the products of solutions of the three term recurrence
relation  $\tau_{n+1}=(x-\beta_n)\tau_n-\gamma_n\tau_{n-1}$ at $y_1$
and $y_2$. Indeed, if
$$\xi_{n+1}=(y_1-\beta_n)\xi_n-\gamma_n\xi_{n-1}\,, \;\;\eta_{n+1}=(y_2-\beta_n)\eta_{n}-\gamma_n\eta_{n-1}\,,$$ then the above
recurrence relation for $\hat{X}_n, \hat{V}_n, \mu_n $ is precisely
the relation satisfied by the products $\xi_n \eta_{n-1}$,
$\xi_{n-1}\eta_n,$ and $ \xi_n \eta_n$, respectively. Taking into
account that a basis of solutions of the three term recurrence
relation $\tau_{n+1}=(x-\beta_n)\tau_n-\gamma_n\tau_{n-1}$ is
constituted by $\{P_n\}$ and $\{q_n\}$  (cf. (\ref{eq:ttrr-Pn}) and
(\ref{eq:ttrr-qn})), the following must hold: $\xi_n$ must be a
combination of $P_n(y_1)$ and $q_n(y_1)$, and $\eta_n$ must be a
combination of $P_n(y_2)$ and $q_n(y_2)$, that is,
$$\xi_n=\alpha_1P_n(y_1)+\lambda_1q_n(y_1)\,, \;\; \eta_n=\alpha_2P_n(y_2)+\lambda_2q_n(y_2)\,.$$

Hence, one has
\begin{equation}
\mu_n=\alpha\,P_{n}(y_1)P_n(y_2)+\beta\,P_n(y_1)q_n(y_2)+\gamma\,q_n(y_1)P_n(y_2)+\delta\,q_n(y_1)q_n(y_2)\,,\label{eq:mun-final}
\end{equation}
with  $\alpha, \beta, \gamma, \delta$ rational functions of $x$,
independent of $n$. Taking $n=0$ in (\ref{eq:mun-final}) we obtain
$$\mu_0=\alpha+\beta\,q_0(y_2)+\gamma\,q_0(y_1)+\delta\,q_0(y_1)q_0(y_2)\,,$$
and such a relation is \begin{equation}\nu_1 \mathbb{D}S =\nu_2
\mathbb{E}_1S\,\mathbb{E}_2S+\nu_3\mathbb{M}S+\nu_4\,,\label{eq:ric-nu}\end{equation}
where the $\nu$'s are given by
$$\nu_1=\frac{(\gamma-\beta)}{2}(y_2-y_1), \; \nu_2=\delta, \;
\nu_3=\gamma+\beta, \; \nu_4=\alpha-\mu_0\,.$$

Note that the $\nu$'s are rational functions  of fixed
 degrees of $x$ and $y_1(x)$.
 Therefore,
 the
 multiplication of (\ref{eq:ric-nu}) by a polynomial in $x$ and $y_1(x)$ of appropriate degree, and taking into account
$y_1=p-\sqrt{r}$, with $p, r$ polynomials, and $\Delta_y=2\sqrt{r}$
(cf. (\ref{eq:y1-y2})), gives us
\begin{equation}(A-\Delta_y\widehat{A}) \mathbb{D}S
=(B-\Delta_y\widehat{B})
\mathbb{E}_1S\,\mathbb{E}_2S+(C-\Delta_y\widehat{C})\mathbb{M}S+D-\Delta_y\widehat{D}\,,\label{eq:ric-y1}\end{equation}
where  $A, B, C, D, \widehat{A}, \widehat{B},\widehat{C},
\widehat{D}$ are polynomials in $x$. On the other hand, there also
holds that the $\nu$'s are rational functions of $x$ and $y_2(x)$
(indeed, one can replace $y_1(x)$ by
$-2(\hat{b}x+\hat{d})/\hat{a}-y_2(x)$). Taking into account that
$y_2=p+\sqrt{r}$, one also obtains
\begin{equation}(A+\Delta_y\widehat{A}) \mathbb{D}S
=(B+\Delta_y\widehat{B})
\mathbb{E}_1S\,\mathbb{E}_2S+(C+\Delta_y\widehat{C})\mathbb{M}S+D+\Delta_y\widehat{D}\,.\label{eq:ric-y2}\end{equation}
Finally, the sum of (\ref{eq:ric-y1}) with (\ref{eq:ric-y2}) yields
the required Riccati equation for $S$ with polynomial coefficients.

 \vspace{0.2cm}

\noindent{\bf{Proof of $(a) \Rightarrow (c)$.}}\\
Note that as $q_{-1}=1, q_0=S,$ then $A \mathbb{D}S =B
\mathbb{E}_1S\,\mathbb{E}_2S+C\mathbb{M}S+D$ is $$A \mathbb{D}q_{n}
   =(l_{n-1}+\Delta \pi_{n-1})\mathbb{E}_1 q_{n}+\left(B\mathbb{E}_1S+C/2\right)\mathbb{E}_2q_{n}+
  \Theta_{n-1}\,\mathbb{E}_1q_{n-1}\,$$ to $n=0$, with  $l_{-1}=C/2, \pi_{-1}=0, \Theta_{-1}=D.$

Let us now deduce the above difference equation for $n\geq 1.$

 Applying $A\mathbb{D}$ to $q_{n}=P_{n}S-P^{(1)}_{n-1},\;
n\geq 1 $ (cf. (\ref{eq:defi-qn})), and using the property
(\ref{eq:Dprod-gf-E1}) we obtain
\begin{equation*}
A\mathbb{D}q_{n}=A\mathbb{D}P_{n}\,\mathbb{E}_1S+A\mathbb{D}S\,\mathbb{E}_2P_{n}-A\mathbb{D}P_{n-1}^{(1)}\,.\label{eq:teo2aux1}\nonumber
\end{equation*}
Using (\ref{eq:est-psin1-(1)})
 as well as
(\ref{eq:defi-Ric-S}) in the above equation, we obtain
\begin{multline*}
A\mathbb{D}q_{n}=(l_{n-1}+\Delta_y
\pi_{n-1})\mathbb{E}_1(P_{n}S-P^{(1)}_{n-1})+\left(B
\mathbb{E}_1S+C/2\,\right)\mathbb{E}_2(P_{n}S-P_{n-1}^{(1)})\\+\Theta_{n-1}\mathbb{E}_1(P_{n-1}S-P_{n-2}^{(1)})\,,
\end{multline*}
thus (\ref{eq:est-qn-(1)}) follows.

Note that an analogue procedure as above (starting by the use of the
property (\ref{eq:Dprod-gf-E2}) combined with the equations
(\ref{eq:est-psin1-(2)})
 as well as
(\ref{eq:defi-Ric-S})) yields (\ref{eq:est-qn-(2)}).

\vspace{0.2cm}

\noindent{\bf{Proof of $(c) \Rightarrow (a)$.}}\\
Take $n=0$ in (\ref{eq:est-qn-(1)}) and use the initial conditions
$l_{-1}=C/2, \pi_{-1}=0, \Theta_{-1}=D,$ thus getting the Riccati
equation for $S$, (\ref{eq:defi-Ric-S}).

Note that, in the same manner, taking $n=0$ in (\ref{eq:est-qn-(2)})
we get  (\ref{eq:defi-Ric-S}).

\section*{Acknowledgements}

This work was partially supported by Centro de Matem\'atica da
Universidade de Coimbra (CMUC), funded by the European Regional
Development Fund through the program COMPETE and by the Portuguese
Government through the FCT - Funda\c{c}\~ao para a Ci\^encia e a
Tecnologia under the project PEst-C/MAT/UI0324/2011.

\end{document}